\newtheorem{theorem}{Theorem}[section]
\newtheorem{corollary}[theorem]{Corollary}
\newtheorem{lemma}[theorem]{Lemma}
\newtheorem{conjecture}[theorem]{Conjecture}
\newtheorem{question}[theorem]{Question}
\theoremstyle{definition}
\newtheorem{definition}[theorem]{Definition}
\newcommand{\dist}{\operatorname{dist}}
\newcommand{\supp}{\operatorname{supp}}
\title[Bernoulli convolutions]{Recent progress on Bernoulli convolutions}
\author[P\'eter P. Varj\'u]{P\'eter P. Varj\'u\footnote{I gratefully acknowledge the support of the Royal Society.}}
\begin{document}

\begin{abstract}
The Bernoulli convolution with parameter $\lambda\in(0,1)$ is the measure on $\bf R$ that
is the distribution of the random power series $\sum\pm\lambda^n$, where $\pm$ are independent fair coin-tosses.
This paper surveys recent progress on our understanding of the regularity properties of these measures.
\end{abstract}

\begin{classification}
11R06, 28A80, 42A85.
\end{classification}

\begin{keywords}
Bernoulli convolution, self-similar measure, entropy, Lehmer's conjecture
\end{keywords}

\maketitle

\section{Introduction}

Fix a parameter $\lambda\in(0,1)$ and let $X_0,X_1,X_2,\ldots$  be a sequence of independent
random variables, whose distribution satisfies ${\bf P}(X_j=-1)={\bf P}(X_j=1)=1/2$.
The Bernoulli convolution with parameter $\lambda$ is the probability measure $\nu_\lambda$ on $\bf R$
that is the distribution of the random variable
\begin{equation}\label{eq:Bernoulli-def}
\sum_{j=0}^\infty X_j\lambda^j.
\end{equation}

The study of Bernoulli convolutions goes back to Jessen and Wintner \cite{JW}, Kershner and Wintner \cite{KW}
and Wintner \cite{Win}.
The main question of interest is to determine the set of parameters for which the measure is
absolutely continuous with respect to the Lebesgue measure.
It was proved  in \cite{JW} that $\nu_\lambda$ is always of pure type, i.e. it is either absolutely continuous
or singular with respect to the Lebesgue measure.
(The result holds more generally for infinite convolutions of discrete measures.)

We are also interested in the related question of determining the set of parameters for which $\dim\nu_\lambda=1$.
It is known that $\nu_\lambda$ is exact dimensional (see e.g. \cite{FH} for a very general result), that is to say,
there is a number $\alpha$ such that
\[
\lim_{r\to 0}\frac{\log(\nu_\lambda(x-r,x+r))}{\log r}=\alpha
\]
for $\nu_\lambda$-almost every $x$, and we call this number $\alpha$ the dimension of $\nu_\lambda$.

If $\lambda<1/2$, then $\nu_\lambda$ is the Cantor-Lebesgue measure on a Cantor set, hence it is singular (as observed in \cite{KW}).
Moreover, the dimension of the measure is $\dim\nu_\lambda=\log 2/\log \lambda^{-1}$.
The reason for this behavior is that any term of the series \eqref{eq:Bernoulli-def} dominates the combined
contribution of the following terms.
On the other hand, $\nu_{1/2}$ is the normalized Lebesgue measure restricted to $[-2,2]$.
For $\lambda>1/2$, $\nu_\lambda$ is more difficult to describe.

The aim of this note is to survey recent progress on this problem.
In Section~\ref{sc:history}, we briefly recall some earlier results.
We discuss generalizations and an application in Section \ref{sc:motivation}.
Finally, we turn to recent developments in Sections \ref{sc:results} and \ref{sc:proofs}.

\subsection*{Acknowledgment}

I am grateful to Jean Bourgain, Emmanuel Breuillard, Mike Hochman, Elon Lindenstrauss, Pablo Shmerkin and Boris Solomyak
for many helpful and inspiring discussions on the subject of this note.

\section{Earlier results}\label{sc:history}

This section is a very brief overview of some of the earlier results on Bernoulli convolutions, which is
by no means intended to be comprehensive.
A more detailed account of the first sixty years of the problem can be found in \cite{PSS}.

\subsection{Fourier transform}\label{sc:Fourier}

Fourier transform is a powerful tool in studying Ber\-noul\-li convolutions,
and it dominated the early literature including \cite{JW}, \cite{KW} and \cite{Win}.
Erd\H os furthered this line of research in two important papers \cite{Erd1}, \cite{Erd2}.
Since $\nu_\lambda$ is the law of a sum of independent random variables, its Fourier transform is the product
of the Fourier transforms of the individual terms.
Indeed, 
\begin{equation}\label{eq:Fourier}
\widehat\nu_\lambda(t)=\prod_{j=0}^\infty \cos(2\pi \lambda^j t).
\end{equation}

Erd\H os used this formula to prove the following two results.

\begin{theorem}[Erd\H os \cite{Erd1}]\label{th:Erdos1}
Suppose that $1/2<\lambda<1$ is a number such that $\lambda^{-1}$ is a Pisot number.
Then $\nu_\lambda$ is singular with respect to the Lebesgue measure.
\end{theorem}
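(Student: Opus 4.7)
The plan is to exploit the Pisot property through the Fourier expression \eqref{eq:Fourier}. If $\nu_\lambda$ were absolutely continuous, then by the Riemann--Lebesgue lemma $\widehat{\nu_\lambda}(t)\to 0$ as $|t|\to\infty$; I therefore aim to exhibit a sequence $t_N\to\infty$ along which $\widehat{\nu_\lambda}(t_N)$ stays bounded away from zero. The Jessen--Wintner pure-type dichotomy will then force $\nu_\lambda$ to be singular.

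Write $\theta=\lambda^{-1}\in(1,2)$ and let $\theta_2,\ldots,\theta_d$ be its Galois conjugates, all satisfying $|\theta_i|\le\rho<1$ by the Pisot hypothesis. Newton's identities applied to the monic integer minimal polynomial of $\theta$ show that the power sums $P_k=\theta^k+\theta_2^k+\cdots+\theta_d^k$ are rational integers, so
\[
\dist(\theta^k,\mathbf{Z})\le|\theta^k-P_k|\le(d-1)\rho^k,
\]
and the orbit $\{\theta^k\}_{k\ge 0}$ approaches $\mathbf{Z}$ geometrically. Testing \eqref{eq:Fourier} at $t_N=\theta^N=\lambda^{-N}$ and reindexing with $k=N-j$ for $j\le N$ (respectively $k=j-N$ for $j>N$) gives
\[
\widehat{\nu_\lambda}(\theta^N)=\prod_{k=0}^{N}\cos(2\pi\theta^k)\cdot\prod_{k=1}^{\infty}\cos(2\pi\lambda^k).
\]
The Pisot estimate yields $|\cos(2\pi\theta^k)-1|\le 2\pi^2(d-1)^2\rho^{2k}$, so the factors of the first product are positive for $k$ large and $\sum_k|\log\cos(2\pi\theta^k)|<\infty$, whence that product converges to a strictly positive limit as $N\to\infty$. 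The second product is independent of $N$ and nonzero: any vanishing factor would force $\lambda^k$ (hence $\theta^k$) to be a non-integral rational, which is impossible since $\theta^k$ is an algebraic integer, and $\theta^k\in\mathbf{Z}$ is itself ruled out for a Pisot number $\theta\in(1,2)$ by examining the roots of $x^k-\theta^k$. Therefore $\widehat{\nu_\lambda}(\theta^N)$ is uniformly bounded below, contradicting Riemann--Lebesgue.

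The principal obstacle is the number-theoretic input: recognising that the Pisot assumption is exactly what keeps $2\pi\theta^k$ close to an integer multiple of $2\pi$, so that the cosines cluster near $1$ rather than oscillating. Once this observation is in hand, the Fourier-analytic side of the argument reduces to a routine convergent-product estimate.
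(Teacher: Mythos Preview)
Your argument is correct and follows essentially the same route as the paper's sketch: evaluate the product formula \eqref{eq:Fourier} at $t=\lambda^{-N}$, use the Pisot property to show $\dist(\theta^k,\mathbf Z)\to 0$ geometrically so the infinite products converge to nonzero limits, and conclude that $|\widehat\nu_\lambda(\lambda^{-N})|$ does not tend to zero, violating Riemann--Lebesgue. One small presentational point: for the finitely many small-$k$ factors in the first product you should also note $\cos(2\pi\theta^k)\neq 0$ (same algebraic-integer argument you give for the second product), and the final conclusion should be phrased as $|\widehat\nu_\lambda(\theta^N)|$ bounded below by a positive constant rather than ``bounded below'', since some individual cosine factors may be negative.
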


Recall that a Pisot number (or Pisot-Vijayaraghavan number) is an algebraic integer all of whose Galois conjugates are inside the unit disk.
One example of such a number is the Golden ratio $(1+\sqrt{5})/2$.
If $\lambda^{-1}$ is a Pisot number, then $\lambda^j$ is very closely approximated by integers, in fact
$\dist(\lambda^j,{\bf Z})<a^{|j|}$ for all $j\in \bf Z$
for some number $a<1$ depending only on $\lambda$.
This follows from the fact that the sum of all Galois conjugates of $\lambda^j$ is an integer.
Using this, Erd\H os proved that
\[
|\widehat\nu_\lambda(\lambda^n)|\ge\prod_{j\in {\bf Z}} |\cos(2\pi \lambda^j)|
\]
can be bounded below by a positive number independent of $n\in\bf Z$, hence $\nu_\lambda$
would violate the Riemann Lebesgue lemma if it were absolutely continuous.

The second result of Erd\H os is the following.

\begin{theorem}[Erd\H os \cite{Erd2}]\label{th:Erdos2}
For every $m\in{\bf Z}_{\ge 0}$, there is $a_m<1$ such that the following holds.
For almost every $\lambda\in(a_m,1)$, $\nu_\lambda$ is absolutely continuous with respect to the Lebesgue measure,
and its density is $m$ times continuously differentiable.
\end{theorem}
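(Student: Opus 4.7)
The plan is to establish a quantitative Fourier decay estimate on average in $\lambda$, then convert it into pointwise decay for almost every $\lambda$ via Fubini, and finally pass to $C^m$ regularity via Fourier inversion. Specifically, the key input is an estimate of the form
\[
\int_a^1 |\widehat\nu_\lambda(t)|^2 \, d\lambda \le C_a |t|^{-\beta(a)} \qquad (|t|\ge 1),
\]
with $\beta(a)\to\infty$ as $a\to 1$. Granting this, I would multiply by $|t|^{2m+2+2\delta}$ and integrate over $|t|\ge 1$: Tonelli then produces a full-measure subset of $\lambda\in(a,1)$ on which $(1+|t|)^{m+1+\delta}\widehat\nu_\lambda(t)\in L^2({\bf R})$, provided $\beta(a)>2m+3+2\delta$. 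Cauchy--Schwarz against $(1+|t|)^{-1-\delta}\in L^2$ then yields $|t|^k\widehat\nu_\lambda(t)\in L^1({\bf R})$ for all $k\le m$, and Fourier inversion produces a density for $\nu_\lambda$ whose $k$-th derivative is continuous for each $k\le m$. Choosing $a_m$ with $\beta(a_m)>2m+3$ completes this step.

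To prove the averaged bound, I would start from the product formula \eqref{eq:Fourier}. For fixed $t$ large, only the $N\approx \log|t|/\log(1/\lambda)$ initial indices $j$ with $\lambda^j|t|\ge 1$ contribute non-trivially; for larger $j$ the factor $\cos(2\pi\lambda^j t)$ is essentially $1$. Group the active indices into blocks $B_k(\lambda,t)=\{j:\lambda^j|t|\in[2^k,2^{k+1}]\}$; each block has cardinality of order $1/\log(1/\lambda)$, which diverges as $\lambda\to 1$. The basic local estimate is that on a sufficiently short subinterval $I\subset (a,1)$, a change of variables $u=\lambda^j t$ converts $\int_I \cos^2(2\pi\lambda^j t)\, d\lambda$ into an integral of $\cos^2$ over a long interval in $u$, yielding a bound $\le \rho|I|$ with an explicit $\rho<1$ uniform in $j$ and in $I$. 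Combining these estimates across the $N$ active factors then gives an overall bound of order $\rho^{cN}=|t|^{-c\log(1/\rho)/\log(1/a)}$, which is the required $\beta(a)$.

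The main obstacle is this last combination step: the cosine factors for different $j$ are coupled through the single parameter $\lambda$, so one cannot simply multiply per-factor bounds. The standard way around this is to integrate one factor (or one block) at a time, after subdividing $(a,1)$ into short subintervals on which the chosen factor is the rapidly oscillating one while the others are essentially constant. A convenient bookkeeping device is to expand
\[
\prod_{j}\cos^2(2\pi\lambda^j t) = 2^{-N}\prod_{j}\bigl(1+\cos(4\pi\lambda^j t)\bigr)
\]
and estimate the resulting sum of oscillatory integrals of cosines of linear combinations $\sum_j \epsilon_j\lambda^j t$ via van der Corput or Weyl-type bounds, exploiting the geometric separation of the frequencies $\lambda^j$. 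The care required is to ensure that the constant $\rho<1$ can be chosen independently of $a$, so that $\beta(a)\ge c/\log(1/a)\to\infty$ as $a\to 1$.
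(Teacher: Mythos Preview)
The paper does not actually give a proof of this theorem; it offers only a one-paragraph heuristic (there are $N\approx\log t/\log\lambda^{-1}$ ``active'' factors in \eqref{eq:Fourier}, and for typical $\lambda$ a positive proportion of them are bounded away from $1$, yielding $|\widehat\nu_\lambda(t)|\lesssim t^{-c/\log\lambda^{-1}}$) and then refers to \cite{Erd2} for the actual argument. So there is no detailed proof in the paper to compare line by line against yours.

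Your outline is a valid route to the theorem and is consistent with that heuristic, but it is organised differently from Erd\H os's original argument. Erd\H os does not pass through an $L^2$-in-$\lambda$ average followed by Fubini; his proof is pointwise/combinatorial. For each large $t$ he bounds the measure of the set of $\lambda\in(a,1)$ on which too many consecutive factors $|\cos(2\pi\lambda^j t)|$ are close to $1$: if $\lambda^j t$ is near a half-integer for several consecutive $j$, the ratio $\lambda=\lambda^{j+1}t/\lambda^j t$ is forced close to a ratio of bounded integers, which traps $\lambda$ in a short interval; a covering plus Borel--Cantelli argument then produces the a.e.\ power decay. This is much closer in spirit to your second alternative (``subdivide $(a,1)$ into short subintervals and handle one rapidly oscillating factor at a time'') than to your $L^2$ expansion of $\prod\cos^2$.

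Your $L^2$ scheme is also workable and is, in effect, the style of argument used in later refinements in the Kahane and Peres--Schlag line. The point you correctly flag as the crux --- decoupling the factors --- is exactly where the content lies: in the expansion $\prod_j(1+\cos(4\pi\lambda^j t))$ the diagonal term $2^{-N}$ already gives the right decay, but one must control $2^N$ oscillatory integrals whose phases $t\sum_j\epsilon_j\lambda^j$ have $\lambda$-derivatives of very different magnitudes. Grouping them by the smallest index with $\epsilon_j\neq 0$ (which governs the size of the derivative) and applying a first-derivative van der Corput bound handles this; your sketch is accurate about where the difficulty sits, just not explicit about this bookkeeping.
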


Referring to the original paper for the proof, we restrict ourselves to a brief heuristic.
We consider the factors $ \cos(2\pi \lambda^j t)$ in \eqref{eq:Fourier}.
For $j>\log t/\log \lambda^{-1}$, the argument of $\cos(\cdot)$ is close to $0$, hence the value of these factors is close to $1$.
On the other hand, for a typical $\lambda$, it is reasonable to expect that a positive proportion of the first
$\lfloor \log t/\log \lambda^{-1}\rfloor$ factors are bounded away from $1$, hence we expect that
\[
|\widehat \nu_\lambda(t)| <\exp(-c\log t/\log \lambda^{-1})=t^{-c/\log\lambda^{-1}},
\]
i.e. the Fourier transform of $\nu_\lambda$ has an arbitrarily fast power type Fourier decay, provided $\lambda$ is sufficiently close to $1$.
Erd\H os found an ingenious argument to make this heuristic rigorous.

Kahane \cite{Kah} observed that Erd\H os's proof of Theorem \ref{th:Erdos2} yield a bound on the Hausdorff dimension for
the exceptional set of parameters (i.e. for the set of parameters, for which the Bernoulli convolution does not have
$m$ times differentiable density).
See also \cite[Section 6]{PSS} for a detailed discussion.

Salem proved a partial converse to Theorem \ref{th:Erdos1} by showing that $\widehat\nu_\lambda(t)\to 0$ as $t\to \infty$
if $\lambda^{-1}$ is not a Pisot number, however, this is not strong enough information to conclude that $\nu_\lambda$
is absolutely continuous.
For Pisot parameters, the Fourier transform of $\nu_\lambda$ was studied by Sarnak \cite{Sar}
and Sidorov and Solomyak \cite{SS}.
They proved that the set of limit points of $\widehat\nu_\lambda(n)$ for $n\in\bf Z$ is countable, a question
motivated by the spectra of multipliers.

There is another class of algebraic numbers for which the Bernoulli convolution has slow Fourier decay.
A Salem number is an algebraic integer $\xi>1$ of degree at least $4$ whose Galois conjugates include $\xi^{-1}$ and all others
are on the unit circle.
It is known (see e.g. \cite[Section 5]{PSS}) that $\nu_\lambda$ does not
have a power-type Fourier decay if $\lambda^{-1}$ is a Salem number,
that is to say, $\limsup_{t\to\infty} |t^\varepsilon\widehat\nu_\lambda(t)|=\infty$ for every
$\varepsilon>0$.
This does not imply that $\nu_\lambda$ is singular, but it implies that $\nu_\lambda$ may not
have any fractional derivatives in $L^1$.

Salem numbers are poorly understood.
It is not known whether they form a dense subset of ${\bf R}_{\ge 1}$.
In comparison, we know that Pisot numbers form a closed subset of the real numbers,
a result due to Salem \cite{Sal2}.

Feng and Wang \cite{FW} gave examples of Bernoulli convolutions with non-Pisot and non-Salem algebraic parameters whose
density, if it exists, does not belong to $L^2$.

\subsection{Explicit examples of absolutely continuous Bernoulli convolutions}\label{sc:Garsia}

In the previous section, we have seen examples of singular Bernoulli convolutions in the interesting
parameter range $\lambda\in(1/2,1)$.
These are complemented by the examples of $\nu_{2^{-1/k}}$ for $k\in{\bf Z}_{>0}$, which are
absolutely continuous, as observed by Wintner \cite{Win}.
Indeed, we note that
\[
\nu_{2^{-1/k}}=\nu_{1/2}*\mu
\]
for some probability measure $\mu$.
To see this decomposition, consider the series defining Bernoulli convolutions and separate the terms divisible by $k$ from the rest.
Since $\nu_{1/2}$ is absolutely continuous, so is $\nu_{2^{-1/k}}$.

Garsia extended this set of examples.
To state his result, we recall the following notion.

\begin{definition}\label{df:Mahler}
Let $\lambda$ be an algebraic number and write $a\prod(x-\lambda_j)$ for its minimal polynomial in ${\bf Z}[X]$,
i.e. $a\in{\bf Z}_{>0}$ is the leading coefficient and $\lambda_j$ are its Galois conjugates.
The Mahler measure of $\lambda$ is the number
\[
M_\lambda=a\prod_{j:|\lambda_j|>1}|\lambda_j|.
\]
\end{definition}

This quantity is widely used to measure the complexity of an algebraic number.

\begin{theorem}[Garsia \cite{Gar1}]\label{th:Garsia}
Let $\lambda$ be a number such that $\lambda^{-1}$ is an algebraic integer and $M_\lambda=2$. 
Then $\nu_{\lambda}$ is absolutely continuous with a density in $L^\infty$.
\end{theorem}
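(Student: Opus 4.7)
The plan is to show directly that $\nu_\lambda(I)\le C|I|$ for every interval $I$, with a constant $C$ depending only on $\lambda$; this gives a density in $L^\infty$. For each $n$ I would write $\nu_\lambda = \mu_n * \tau_n$, where $\mu_n$ is the distribution of the partial sum $S_n=\sum_{j<n} X_j\lambda^j$ (a discrete probability measure supported on the points $P_\epsilon(\lambda)=\sum_{j<n}\epsilon_j\lambda^j$ for $\epsilon\in\{\pm 1\}^n$), and $\tau_n$ is the distribution of the tail $\sum_{j\ge n} X_j\lambda^j$, supported in an interval of length $2\lambda^n/(1-\lambda)$.

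The central ingredient would be a separation lemma: for any nonzero $R\in{\bf Z}[x]$ of degree less than $n$ with coefficients bounded by $2$ in absolute value,
\[
|R(\lambda)| \;\ge\; c\cdot M_\lambda^{-n} \;=\; c\cdot 2^{-n}
\]
for some $c>0$ depending only on $\lambda$ (up to a harmless polynomial-in-$n$ factor should $\beta=\lambda^{-1}$ have Galois conjugates on the unit circle). To prove this I would set $\beta=\lambda^{-1}$ and form the reversed polynomial $\tilde R(x)=x^{n-1}R(1/x)$, so that $\tilde R(\beta)=\beta^{n-1}R(\lambda)$ is a nonzero algebraic integer in ${\bf Z}[\beta]$ and its norm $\prod_j \tilde R(\beta_j)$ is therefore a nonzero rational integer of absolute value at least $1$. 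The trivial bound $|\tilde R(\beta_j)|\le C\max(1,|\beta_j|)^{n-1}$ on each Galois conjugate, combined with the hypothesis $\prod_{|\beta_j|>1}|\beta_j|=M_\lambda=2$, yields $\prod_{j\ne 1}|\tilde R(\beta_j)|\le C'(2/|\beta|)^{n-1}$, and hence the claimed lower bound for $|R(\lambda)|=|\beta|^{-(n-1)}|\tilde R(\beta)|$.

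Applied to differences $R=P_\epsilon-P_{\epsilon'}$ with $\epsilon\ne\epsilon'$, the separation lemma shows that the distinct atoms of $\mu_n$ are at pairwise distance at least $c\cdot 2^{-n}$. Together with a uniform bound on the multiplicity $m(v)=\#\{\epsilon:P_\epsilon(\lambda)=v\}$, this would give $\mu_n(J) \le C(|J|+2^{-n})$ for every interval $J$. Note that $M_\lambda=2$ forces $\lambda\ge 1/2$, so for $|J|=\lambda^n$ this simplifies to $\mu_n(J)\le C\lambda^n$. For an arbitrary interval $I$ of length $r$, choosing $n$ with $\lambda^n$ of order $r$ and using that $\tau_n$ is a probability measure supported on an $O(\lambda^n)$-interval, one obtains
\[
\nu_\lambda(I) \;=\; \int \tau_n(I-y)\,d\mu_n(y) \;\le\; \mu_n(I^*) \;\le\; C|I|,
\]
where $I^*$ is the $O(\lambda^n)$-enlargement of $I$.

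The main obstacle I expect is the multiplicity bound, that is, bounding uniformly in $n$ the number of nonzero $\{-1,0,1\}$-coefficient polynomials of degree less than $n$ that vanish at $\lambda$. Via the reversed polynomial, such a polynomial is a multiple of the monic minimal polynomial of $\beta$, and the integer coefficients of the quotient satisfy a linear recurrence driven by $\{-1,0,1\}$-valued inputs and governed by the Galois conjugates of $\beta$. The hypothesis $M_\lambda=2$ forces the norm $|\prod_j\beta_j|$ to lie in $\{1,2\}$, which (especially when this norm equals $2$, so that the recurrence's leading coefficient creates a strong divisibility obstruction) should constrain the recurrence enough to make only finitely many finitely-supported solutions possible, uniformly in $n$.
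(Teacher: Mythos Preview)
Your overall strategy is exactly Garsia's, as sketched in the paper: separate the $2^n$ level-$n$ atoms by $c\cdot 2^{-n}$, count how many fall in an interval, and pass to the limit (or equivalently convolve with the tail). The gap is that you are missing the two algebraic facts that make this run cleanly, and your workaround for one of them is not sufficient.

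First, the ``main obstacle'' you identify---the multiplicity $m(v)$---is not an obstacle at all. Garsia observed that under the hypothesis $\lambda$ is \emph{not} an algebraic integer; since any nonzero polynomial with coefficients in $\{-1,0,1\}$ has leading coefficient $\pm1$, it cannot vanish at $\lambda$. Hence the $2^n$ points $\sum_{j<n}\epsilon_j\lambda^j$ are all distinct and each carries mass exactly $2^{-n}$. Your proposed route via the linear recurrence governing $\tilde R/m_\beta$ is therefore unnecessary, and in any case the sketch you give (divisibility obstruction when $|N(\beta)|=2$) does not obviously yield a bound uniform in $n$, nor does it handle the case $|N(\beta)|=1$.

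Second, your assertion that a polynomial-in-$n$ factor in the separation bound is ``harmless'' is wrong for the $L^\infty$ conclusion. If the separation is only $c\,n^{-m}2^{-n}$, then choosing $n$ with $\lambda^n\asymp|I|$ gives
\[
\nu_\lambda(I)\le C\,|I|\,(\log|I|^{-1})^{m},
\]
which is absolute continuity but not an $L^\infty$ density. The paper records that Garsia also proved $\lambda$ has \emph{no} Galois conjugates on the unit circle when $\lambda^{-1}$ is an algebraic integer of Mahler measure $2$, so $m=0$ in Theorem~\ref{th:Garsia-separation} and the issue disappears. You should prove (or cite) these two algebraic facts rather than trying to circumvent them.
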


The numbers $2^{-1/k}$ all satisfy the hypothesis of the theorem, and Garsia
provided the real roots of $x^{p+n}-x^n-2$ for $\max\{p,n\}\ge 2$ as further examples.
Rodemich found additional such polynomials including
\[
x^3+x^2-x-2, \quad x^3-x-2, \quad x^3-2x-2, \quad x^3-x^2+x-2.
\]
Until recently the numbers provided by Theorem \ref{th:Garsia}
were the only known explicit examples of absolutely continuous Bernoulli convolutions.
See \cite{HP} for a study of these numbers.

Garsia's proof is based on the following ideas.
He showed that these numbers are not algebraic integers themselves and hence have the property that $P(\lambda)\neq0$
for any non-zero polynomial $P$ that has coefficients $-1$, $0$ or $1$ only.
This implies that the random variable $\sum_{j=0}^{n-1}X_j\lambda^j$ takes $2^n$ different values with equal probabilities.
Indeed, if $\{a_j\},\{b_j\}\in\{-1,1\}^n$ are two different sequences, then
\[
\sum_{j=0}^{n-1} a_j\lambda^j-\sum_{j=0}^{n-1}b_j\lambda^j=
\sum_{j=0}^{n-1}(a_j-b_j)\lambda^j=2P(\lambda)\neq0.
\]

The set of polynomials of degree at most $d$ with coefficient $-1$, $0$ or $1$ play an important role in the study
of Bernoulli convolutions, therefore we introduce the notation $\mathcal{P}_d$ for it.
The second ingredient in Garsia's proof is the following estimate that also plays an important role in later developments.

\begin{theorem}\label{th:Garsia-separation}
Let $\lambda$ be an algebraic number and denote by $m$  the number of its Galois conjugates on the unit circle.
Then
\[
|P(\lambda)|\ge cd^{-m}M_\lambda^{-d}\quad \text{or}\quad P(\lambda)=0
\]
for any $P\in\mathcal{P}_{d}$, where $c$ is a constant depending only on $\lambda$.
\end{theorem}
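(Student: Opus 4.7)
The plan is to invoke the classical principle that a nonzero rational integer has absolute value at least one, applied to the resultant of $P$ with the minimal polynomial of $\lambda$. Write $q(x)=a\prod_{i=1}^n(x-\lambda_i)$ with $\lambda_1=\lambda$, and assume $P(\lambda)\neq 0$ (otherwise the alternative in the statement is satisfied). Since $q$ is irreducible in $\mathbf{Q}[x]$, it is coprime to $P$, so the resultant
\[
\mathrm{Res}(q,P)=a^d\prod_{i=1}^n P(\lambda_i)
\]
is a nonzero element of $\mathbf{Z}$, and in particular $a^d\prod_{i=1}^n|P(\lambda_i)|\geq 1$. It then remains to bound the contribution of the conjugates other than $\lambda$ itself from above, and to solve for $|P(\lambda)|$.

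For the second step, I would estimate $|P(\lambda_i)|$ by the triangle inequality, splitting the Galois conjugates into three groups according to whether $|\lambda_i|<1$, $|\lambda_i|=1$, or $|\lambda_i|>1$. Since the coefficients of $P$ lie in $\{-1,0,1\}$,
\[
|P(\lambda_i)|\leq\begin{cases}(1-|\lambda_i|)^{-1} & \text{if }|\lambda_i|<1,\\ d+1 & \text{if }|\lambda_i|=1,\\ |\lambda_i|^{d+1}/(|\lambda_i|-1) & \text{if }|\lambda_i|>1.\end{cases}
\]
The first kind of factor contributes only a constant depending on $\lambda$; the second kind gives precisely a factor of $(d+1)^m$ since there are exactly $m$ unimodular conjugates; and for the third kind I would use the identity $\prod_{|\lambda_i|>1,\,i\neq 1}|\lambda_i|^d\leq(M_\lambda/a)^d$, which is immediate from Definition~\ref{df:Mahler}. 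Collecting these,
\[
a^d\prod_{i\neq 1}|P(\lambda_i)|\leq C(d+1)^m M_\lambda^d
\]
for some $C=C(\lambda)$, and inserting this into the lower bound from the first step yields $|P(\lambda)|\geq c\,d^{-m}M_\lambda^{-d}$, as claimed.

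I do not expect a serious obstacle: the resultant identity is standard, and the product bound is essentially bookkeeping. The key structural observation that makes the estimate come out cleanly is that $M_\lambda$ is by design the product of $a$ with the conjugates outside the unit disk, which is exactly the combination that arises from $a^d\prod_i|P(\lambda_i)|$ after the small-conjugate factors have been absorbed into the constant. The unimodular conjugates are the only ones whose individual estimates can grow with $d$, and that is where the factor $d^m$ comes from.
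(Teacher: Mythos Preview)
Your argument is correct and is the standard proof of this classical separation lemma. The paper itself does not give a proof of Theorem~\ref{th:Garsia-separation}; it merely states it as Garsia's estimate and uses it later, so there is nothing to compare against in terms of approach.

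One cosmetic point: the resultant identity should read $\mathrm{Res}(q,P)=a^{\deg P}\prod_i P(\lambda_i)$ rather than $a^d$, since $P\in\mathcal{P}_d$ may have degree strictly less than $d$. This does not affect your inequality, because $a\ge 1$ gives $a^d\prod_i|P(\lambda_i)|\ge a^{\deg P}\prod_i|P(\lambda_i)|\ge 1$, so the rest of the computation goes through unchanged.
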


Garsia showed that under the hypothesis of Theorem \ref{th:Garsia}, $\lambda$ has no conjugates on the unit circle,
hence there is a constant $c$ such that consecutive points in the support of $\sum_{j=0}^{n-1}X_j\lambda^j$
are separated by at least $c 2^{-n}$.
Therefore the number of points that fall in a given interval of length $a$ is at most $c^{-1}2^na+1$.
Since each atom has probability  $2^{-n}$, this gives an upper bound $c^{-1}a+2^{-n}$ for the probability that 
$\sum_{j=0}^{n-1}X_j\lambda^j$ is in the interval in question, and the claim follows if we take the limit $n\to\infty$.

\subsection{Transversality}\label{sc:transversality}

As we have discussed in Section \ref{sc:Fourier},
Erd\H os showed that $\nu_{\lambda}$ is absolutely continuous for almost all $\lambda$
in an interval near $1$.
Whether or not the same holds on $(1/2,1)$ was open until a remarkable paper
of Solomyak \cite{Sol}.

\begin{theorem}[Solomyak]
The Bernoulli convolution $\nu_\lambda$ is absolutely continuous and has density in $L^2$
for almost every $\lambda\in(1/2,1)$.
\end{theorem}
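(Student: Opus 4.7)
The plan is to follow Solomyak's \emph{transversality method}, which rests on a Fubini-type argument over the parameter $\lambda$.

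\emph{Reduction.} Let $Y,Y'$ be independent copies of $\sum X_j\lambda^j$. The variables $\xi_n:=(X_n-X_n')/2$ are i.i.d.\ with values in $\{-1,0,1\}$ and law $(1/4,1/2,1/4)$, and the distribution $\mu_\lambda$ of $(Y-Y')/2=\sum\xi_n\lambda^n$ is proportional to $\nu_\lambda*\nu_\lambda$ after a linear change of variable. A standard estimate yields
\[
\|\nu_\lambda\|_2^2 \;\leq\; C\,\liminf_{r\to 0}\frac{\mu_\lambda(-r,r)}{r},
\]
so it suffices to show the right-hand side is finite for almost every $\lambda$ in each compact interval $I\subset(1/2,1)$. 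By Fatou's lemma, this will follow from a uniform bound
\[
\int_I \mu_\lambda(-r,r)\,d\lambda \leq C_I\,r \qquad\text{for all small }r>0.
\]

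\emph{Transversality.} Expanding $\mu_\lambda$ as a sum over sequences $\xi\in\{-1,0,1\}^\infty$ with product weights $p(\xi)$ and interchanging sum with integral gives
\[
\int_I \mu_\lambda(-r,r)\,d\lambda = \sum_\xi p(\xi)\,\bigl|\{\lambda\in I:|g_\xi(\lambda)|<r\}\bigr|, \qquad g_\xi(\lambda):=\sum_n \xi_n\lambda^n.
\]
The required $O(r)$ bound will follow from the \emph{transversality inequality}: for some $\delta,c>0$ depending on $I$ and every nonzero $\xi$ normalised so that its first nonzero coefficient equals $+1$,
\[
g_\xi(\lambda)\leq\delta \implies g_\xi'(\lambda)\leq -c \qquad(\lambda\in I).
\]
Such an inequality forces $g_\xi$ to cross zero at most once on $I$, with slope at most $-c$, so $|\{\lambda\in I:|g_\xi(\lambda)|<r\}|\leq 2r/c$ uniformly in $\xi$; summing against $p$ produces the desired bound.

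\emph{Main obstacle.} The hard analytic step is establishing the transversality inequality. A direct analysis reduces it to bounding an explicit extremal power series away from zero, and only succeeds on an initial interval $(1/2,\lambda_0]$ with $\lambda_0<1$. To cover the rest of $(1/2,1)$ I would exploit the self-similar decomposition $Y=\sum_{r=0}^{k-1}\lambda^r Y^{(r)}$ with $Y^{(r)}$ i.i.d.\ $\sim\nu_{\lambda^k}$: for $\lambda\in(\lambda_0,1)$ choose $k$ so that $\lambda^k$ falls in a range where $\nu_{\lambda^k}\in L^2$ is already known; then $\nu_\lambda$ is a convolution of rescaled $L^2$ densities and so is itself in $L^2$. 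Iterating this propagation covers $(1/2,1)$ up to a null set, completing the proof.
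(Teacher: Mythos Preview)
Your outline follows the Peres--Solomyak argument the paper sketches: the $L^2$ correlation criterion, Fubini in $\lambda$, and $\delta$-transversality for power series with $\{-1,0,1\}$ coefficients on an initial interval $(1/2,\lambda_0]$. Those parts are correct and match the paper.

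The gap is in your propagation step. Your plan is to choose $k$ with $\lambda^k\in(1/2,\lambda_0]$ and then invoke $\nu_{\lambda^k}\in L^2$. But the set of $\lambda$ for which such a $k$ exists is $\bigcup_{k\ge1}\bigl(2^{-1/k},\lambda_0^{1/k}\bigr]$, and an elementary check shows this union equals $(1/2,1)$ only when $\lambda_0\ge 2^{-1/2}$. The transversality interval one actually obtains is strictly shorter: the paper quotes $\lambda_0=2^{-2/3}\approx0.63$, well below $2^{-1/2}\approx0.707$. For every $\lambda$ in the residual window $(2^{-2/3},2^{-1/2})$ and every $k\ge2$ one has $\lambda^k<\lambda^2<1/2$, so $\nu_{\lambda^k}$ is a singular Cantor--Lebesgue measure and the convolution trick yields nothing; iterating the propagation can never reach into this window either.

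Covering that window is a genuine, nontrivial part of Solomyak's and Peres--Solomyak's proofs. They do it not by the power-$k$ decomposition but by pushing the transversality interval further to the right for a \emph{restricted} family of power series (obtained, roughly, by forcing certain early coefficients to vanish, which corresponds to passing to a thinned convolution factor of $\nu_\lambda$). That additional extremal-function analysis is exactly what your proposal is missing.
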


Solomyak's original proof has been simplified in \cite{PerSo}.
The result has been improved by Peres and Schlag \cite{PerSc}, who proved that the density also has fractional derivatives in $L^2$
and they gave estimates for the Hausdorff dimension of the exceptional set of parameters, for which $\nu_\lambda$
is singular.

For the details we refer to the original papers and the survey \cite{PSS}, which treats the topic extensively.
Now we only highlight the main idea, called transversality, which can be traced back to Pollicott and Simon \cite{PolS}.
Denote by $\mathcal{P}_\infty$ the set of analytic functions on the unit disk, whose Taylor series at $0$ has
the coefficients $-1$, $0$ or $1$ only.
Solomyak proved that $[1/2,2^{-2/3}]$ is an interval of transversality for $\mathcal{P}_d$, that is to say,
any function in $\mathcal{P}_d$ may have at most $1$ zero in the interval $[1/2,2^{-2/3}]$.
Some bound on the number of zeros could be deduced from Jensen's formula, however, it is essential for the argument
that there are no more than a single zero, and the proof of this fact requires much more delicate ideas.

We indicate briefly and heuristically why transversality is useful in the study of Bernoulli convolutions.
Let $X_0,X_1,\ldots$ and $X'_0,X'_1,\ldots$ be two independent sequences of independent unbiased $\pm1$
variables and consider the random variables
$\sum_{j=0}^\infty X_j\lambda^j$ and $\sum_{j=0}^\infty X_j'\lambda^j$,
which both have distribution $\nu_\lambda$.
Fix a small number $r>0$ and consider a subdivision of the support of $\nu_\lambda$ into intervals of length $r$.
If $\nu_\lambda$ is singular, then most of the mass is concentrated on a small proportion of these intervals.
This suggests that  if $\nu_\lambda$ is singular then
\[
{\bf P}\Big(\Big|\sum_{j=0}^\infty X_j\lambda^j-\sum_{j=0}^\infty X_j'\lambda^j\Big|<r\Big)
\]
is `much larger' than it would be if the distribution was absolutely continuous.

A nice way to quantify this idea is the following statement.
If there is a constant $C$ independent of $r$ such that
\[
{\bf P} \Big(\Big|\sum_{j=0}^\infty X_j\lambda^j-\sum_{j=0}^\infty X_j'\lambda^j\Big|<r\Big)<C r,
\]
then $\nu_\lambda$ is absolutely continuous with density in $L^2$.

Now we fix the values $X_j$ and $X_j'$ and take a random value of $\lambda$ uniformly in the interval $[1/2,2^{-2/3}]$.
Knowing that $\sum_{j=0}^\infty X_j\lambda^j-\sum_{j=0}^\infty X_j'\lambda^j$ may have only one zero in this interval
allows us to estimate the probability of the event
\[
\Big|\sum_{j=0}^\infty X_j\lambda^j-\sum_{j=0}^\infty X_j'\lambda^j\Big|<r
\]
in terms of the first index $j$ such that $X_j\neq X_{j}'$.
Taking expectation over the $X_j$ and $X_j'$ gives the desired result.
For the details of this argument see \cite{PerSo}.

\section{Generalizations and applications}\label{sc:motivation}

Bernoulli convolutions are natural objects from several points of view including fractal geometry,
dynamics and number theory.
We have already seen that the arithmetic properties of $\lambda$ has a decisive influence on the regularity
of Bernoulli convolutions.
This is a dominant feature also in the more recent results that we discuss later.

Bernoulli convolutions appear naturally in the context of dynamics.
For example, Alexander and Yorke \cite{AY} described the Sinai Ruelle Bowen measure for the
fat Baker's transformation in terms of Bernoulli convolutions.
For further studies of dynamical properties of Bernoulli convolutions see e.g. \cite{SV} and \cite{KP}.

A detailed exposition of these connections would go beyond the scope of these notes.
We limit ourselves to a brief discussion of a more general framework to which Bernoulli convolutions
belong.
Then we briefly mention a conjecture of Breuillard on growth of groups and its connection to Bernoulli convolutions.

\subsection{Self-similar and self-affine measures}\label{sc:self-similar}

Given a continuous map $T:X\to X$ on a $\sigma$-compact metric space and a measure $\mu$
on $X$, we denote by $T(\mu)$ the pushforward of $\mu$, that is the unique measure
that satisfies
\[
\int f dT(\mu)=\int f\circ T d\mu
\]
for all compactly supported continuous functions $f:X\to\bf R$.

It follows from the definition that Bernoulli convolutions satisfy the identity
\begin{equation}\label{eq:selfsimilar}
\nu_\lambda=\frac{1}{2} T_{-1}(\nu_\lambda)+\frac{1}{2}T_{1}(\nu_\lambda),
\end{equation}
where $T_j(x)=\lambda x+j$.
It is not difficult to see that for any given $\lambda$, there is precisely one probability
measure that satisfies this identity and this
provides an alternative definition of Bernoulli convolutions.

It is easy to see from this definition that Bernoulli convolutions are always of pure type.
That is, they are singular or absolutely continuous with respect to the Lebesgue measure.
Indeed, if this was not the case, then both the singular and the absolutely continuous parts
would satisfy \eqref{eq:selfsimilar}.
In fact, more is true.
Mauldin and Simon \cite{MS} proved that, if non-singular, $\nu_\lambda$ is equivalent to the Lebesgue measure
restricted to the support of $\nu_\lambda$, that is to say, the Lebesgue measure is also absolutely continuous
with respect to $\nu_\lambda$. 

This definition of Bernoulli convolutions generalizes in a straight forward manner.
Let $T_1,\ldots, T_k$ be a collection of contracting similarities of Euclidean space ${\bf R}^d$ and
let $p_1,\ldots, p_k$ be a probability vector.
Then there is a unique probability measure $\mu$ on ${\bf R}^d$ that satisfies the identity
\[
\mu=p_1 T_1(\mu)+\ldots+ p_kT_k(\mu).
\]
Measures with this property are called self-similar.
The requirement that the similarities $T_j$ are contracting may be relaxed.

This concept can be further generalized if we allow the transformations $T_j$
to be in a more general class, for example affine transformations.
In the latter case, the measure satisfying the identity is called self-affine.

Certain self-affine structures have intimate connections to Bernoulli convolutions,
see for example the work of Przytycki and Urba\'nski \cite{PU}.

Another class of related measures are the so-called Furstenberg measures.
To define them, we consider the projective line $X={\bf P}^1$ and we require $T_j$ to be Mobius
transformations.
These measures play an important role in the study of random walks in Lie groups and also
in the Anderson-Bernoulli model on $\bf Z$.
See \cite{Bou3}, \cite{Bou4}, \cite{BPS} and \cite{HocS} for some recent results about these measures;
some techniques used in these papers have strong analogues in the study of Bernoulli convolutions.

\subsection{Growth in groups}\label{sc:growth}

Let $G$ be a group and let $S\subset G$ be a finite subset.
We denote by $S^n$ the set of elements of $G$ that can be expressed as the product of $n$ elements
of $S$. (Repetition of elements is allowed.)

The speed at which the sequence $|S^n|$ grows and its relation to the structure of the group $G$
has been extensively studied in combinatorial and geometric group theory.
A celebrated result of Gromov \cite{Gro} asserts that $|S^n|$ grows polynomially if and only if the group generated
by $S$ is virtually nilpotent.
If $G$ is a free group and $S$ contains at least two non-commuting elements, then $|S^n|$ grows exponentially.
Grigorchuk \cite{Gri} gave examples of groups of intermediate growth, that is, when $|S^n|$ grows faster than any polynomial
but slower than exponential.

If $G={\rm GL}_d({\bf C})$, then the Tits alternative yields a duality:
the growth of $|S^n|$ is either a polynomial or exponential.
In this case, the exponential growth rate defined as
\[
\rho(S)=\lim_{n\to\infty}\frac{\log|S^n|}{n}
\]
is an important quantity of interest.
Breuillard \cite{Bre5} has made the following conjecture.

\begin{conjecture}[Breuillard]\label{cj:growth}
For every integer $d>0$, there is $c_d>0$ such that
\[
\rho(S)\ge c_d\quad \text{or}\quad \rho(S)=0
\]
for any $S\subset {\rm GL}_d({\bf C})$.
\end{conjecture}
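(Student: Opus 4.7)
The plan is to reduce Conjecture~\ref{cj:growth} to a dichotomy on the algebraic structure of $\langle S\rangle$, and then to an arithmetic height estimate of Lehmer type.

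First I would dispose of the polynomial-growth regime. By Gromov's theorem, if $\langle S\rangle$ is virtually nilpotent then $|S^n|$ grows polynomially, so $\rho(S)=0$. Conversely, the Tits alternative in ${\rm GL}_d({\bf C})$ combined with the Milnor--Wolf theorem shows that a finitely generated linear group which is not virtually nilpotent has exponential growth, i.e.\ $\rho(S)>0$. (In particular $d=1$ is trivial because ${\bf C}^*$ is abelian and $\rho$ vanishes on it.) The real content of the conjecture is therefore: for every finite $S\subset{\rm GL}_d({\bf C})$ generating a non-virtually-nilpotent subgroup, produce a positive lower bound $\rho(S)\ge c_d$ depending only on $d$.

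Next I would apply a uniform Tits alternative to locate a free sub-semigroup inside a bounded power of $S$. The goal is to find an integer $n=n(d)$ and elements $a,b\in S^n$ generating a free semigroup, so that $\rho(S)\ge (\log 2)/n\ge c_d$. Finding such a pair qualitatively is standard via escape from proper algebraic subvarieties. The serious issue is to make the escape quantitative: one needs the word-length at which a ping-pong pair first appears to be controlled by $d$ alone, independently of the arithmetic of the entries of $S$. A workable route is to combine a non-commutative product theorem in ${\rm GL}_d$ with an effective form of the strong Tits alternative, reducing the problem to a height estimate on the spectra of a bounded list of commutators drawn from $S$.

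The main obstacle, which I expect to be the true heart of the difficulty, is the Lehmer barrier. Already in $d=2$, taking $A=\mathrm{diag}(\lambda,\lambda^{-1})$ for an algebraic $\lambda$ of small Mahler measure together with a fixed unipotent $B$, one sees that any uniform lower bound $\rho(\{A,B\})\ge c_2>0$ forces a uniform positive lower bound on $\log M_\lambda$ for non-torsion $\lambda$, which is precisely Lehmer's conjecture; cf.\ Definition~\ref{df:Mahler} and the role of $M_\lambda$ in Theorem~\ref{th:Garsia-separation}. Any successful attack must therefore either prove Lehmer's conjecture as a by-product, or else exploit the non-commutative structure (many directions for ping-pong generators) to average over Galois conjugates and bypass the single-Mahler-measure obstruction. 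A natural avenue is to feed equidistribution of small points of Bilu--Zhang type into the escape step to produce a contradiction whenever $\rho(S)$ is exceedingly small, but closing this loop uniformly in $d$ is exactly where the conjecture stands open.
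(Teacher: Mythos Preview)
The statement you are addressing is a \emph{conjecture}, not a theorem: the paper does not prove it, and indeed explicitly records (in the paragraph following the conjecture and in the discussion of $S_\lambda$) that Breuillard established only the non-virtually-solvable case, and that the full statement is \emph{equivalent} to Lehmer's conjecture via \cite{BreV1}. So there is no proof in the paper to compare your proposal against.

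Your write-up is an accurate survey of the known landscape rather than a proof, and you yourself concede this in the final sentence. The first two steps you outline---reducing to the non-virtually-nilpotent case and then invoking a uniform Tits alternative to find a free sub-semigroup in $S^{n(d)}$---are exactly what Breuillard's work \cite{Bre2}, \cite{Bre3}, \cite{Bre4} already accomplishes when $\langle S\rangle$ is not virtually solvable. The gap is precisely the virtually solvable, non-virtually-nilpotent case, where the uniform ping-pong argument breaks down because the escape time from a torus is governed by the Mahler measure of eigenvalue ratios. Your $d=2$ example with $A=\mathrm{diag}(\lambda,\lambda^{-1})$ is essentially the paper's $S_\lambda$ example in disguise, and the paper makes the same point: a uniform lower bound on $\rho(S_\lambda)$ forces a uniform lower bound on $\log M_\lambda$, i.e.\ Lehmer. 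Your suggestion to use Bilu--Zhang equidistribution to ``average over Galois conjugates'' is appealing but does not, as far as anyone currently knows, circumvent the need for a height gap: equidistribution of small points tells you where conjugates sit on the unit circle, not that they cannot all be close to it. In short, your proposal correctly identifies the obstruction but does not remove it, and the conjecture remains open.
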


Breuillard \cite{Bre2}, \cite{Bre3} and \cite{Bre4} proved this in the important case, when the group generated by $S$
is not virtually solvable.
See \cite{EMO} and \cite{BreG} for related earlier work.

Breuillard \cite{Bre1} also observed the importance of the special case
\begin{equation}\label{eq:Slambda}
S_\lambda
=\left\{
\left(
\begin{array}{cc}
\lambda & 1\\
0 & 1
\end{array}
\right),
\left(
\begin{array}{cc}
\lambda & -1\\
0 & 1
\end{array}
\right)
\right\}.
\end{equation}
He noted that $\rho(S_\lambda)\le \log M_\lambda$, where $M_\lambda$ is the Mahler measure of $\lambda$,
which we encountered in Section \ref{sc:Garsia}.
This means that Breuillard's  growth conjecture implies Lehmer's conjecture, which asserts that the value
of $M_\lambda$ is bounded away from $1$ for numbers that are not roots of unity.

The semigroup generated by $S_\lambda$ is intimately related to Bernoulli convolutions.
Indeed, the action of the matrices \eqref{eq:Slambda} on the line $(x,1)^T\in{\bf R}^2$ is given by the formula
$x\mapsto \lambda x\pm 1$, which are precisely the transformations that appear in \eqref{eq:selfsimilar}.

Breuillard and the author \cite{BreV1} gave an estimate for the entropy of the random walk on the
semigroup generated by $S_\lambda$ in terms of $M_\lambda$.
The proof utilizes measures related to Bernoulli convolutions.
Using these estimates, they proved that Conjecture \ref{cj:growth}
is equivalent to Lehmer's conjecture.
(The general case of the growth conjecture can be reduced to the case of $S_\lambda$.)
The estimates for the entropy of the random walk also have consequences for the dimension of Bernoulli convolutions,
which we will discuss in the next section.

\section{Recent developments}\label{sc:results}

\subsection{Results on dimension}\label{sc:res-dimension}

Hochman \cite{Hoc1} made a recent breakthrough in the study of self-similar measures.
(See also \cite{Hoc2} for a more elementary exposition.)
We discuss his results only in the special case of Bernoulli convolutions, but they are valid in greater
generality, in fact, he extended them even to higher dimensions subsequently \cite{Hoc3}.

One of his results provide the following information about Bernoulli convolutions of dimension less than $1$.
Recall that $\mathcal{P}_d$ denotes the set of polynomials of degree at most $d$ with coefficients
$-1$, $0$ or $1$.

\begin{theorem}[Hochman]\label{th:hochman}
Suppose $\lambda\in(1/2,1)$ is a number such that $\dim \nu_\lambda<1$ and let $A\in{\bf R}_{>0}$.
Then for all sufficiently large (depending on $\lambda$ and $A$) integer $d$, there is a number $\xi\in\bf C$
that is a root of a polynomial in $\mathcal{P}_d$ such that
\[
|\lambda-\xi|<\exp(-Ad).
\]

In particular, the set
\[
\{\lambda\in(1/2,1):\dim\nu_\lambda<1\}
\]
is of $0$ packing dimension.
\end{theorem}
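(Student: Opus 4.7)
The plan is to follow Hochman's entropy method. Let $\mu_n$ denote the distribution of the partial sum $\sum_{j=0}^{n-1}X_j\lambda^j$, an atomic measure on at most $2^n$ points whose pairwise differences are the values $2P(\lambda)$ for $P\in\mathcal{P}_{n-1}$. Iterating the self-similarity relation \eqref{eq:selfsimilar} yields
\[
\nu_\lambda = \mu_n \ast S_{\lambda^n}\nu_\lambda,
\]
where $S_r$ denotes the pushforward by $x\mapsto rx$. Writing $H_k(\mu)$ for the Shannon entropy of $\mu$ relative to the partition of $\mathbf{R}$ at dyadic scale $2^{-k}$, one has $\dim\nu_\lambda=\lim_{k\to\infty}H_k(\nu_\lambda)/(k\log 2)$, so the hypothesis $\dim\nu_\lambda<1$ produces an entropy shortfall $H_k(\nu_\lambda)\le(1-\delta)k\log 2$ for some fixed $\delta>0$ and all large $k$. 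Through the displayed identity, this translates into a deficit of the same linear order for the atomic measure $\mu_n$ at fine dyadic scales.

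The core step, and the main obstacle, is Hochman's inverse theorem for entropy of convolutions. In the form needed, it asserts that whenever a finitely supported measure $\mu$ satisfies $H_k(\mu\ast\mu)\le H_k(\mu)+\varepsilon$, then $\mu$ must be $\varepsilon'$-concentrated on a positive proportion of dyadic scales $j\le k$, with $\varepsilon'\to 0$ as $\varepsilon\to 0$. Iterating this inverse statement scale-by-scale, the fixed entropy shortfall supplied by $\dim\nu_\lambda<1$ compounds across $\Theta(n)$ scales into a super-exponentially small polynomial value: for every $A>0$ and every sufficiently large $n$ there is a nonzero $P\in\mathcal{P}_{n-1}$ with
\[
|P(\lambda)|<\exp(-An^2).
\]
Establishing the quadratic-in-$n$ exponent --- propagating a single scale's drop through many scales without loss --- is the principal technical difficulty, and requires a uniform, scale-free implementation of the inverse mechanism.

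Given such a polynomial of degree $m\le n-1$ and leading coefficient $\pm 1$, factor $P(z)=\pm\prod_{j=1}^m(z-\xi_j)$, so that $|P(\lambda)|=\prod_j|\lambda-\xi_j|$ and
\[
\min_{1\le j\le m}|\lambda-\xi_j|\le|P(\lambda)|^{1/m}\le\exp(-An^2/m)\le\exp(-An).
\]
Setting $d=n-1$ and choosing $\xi$ to be the closest root gives $|\lambda-\xi|<\exp(-Ad)$ after absorbing a harmless constant into $A$, which proves the first assertion. Note that all roots of $P$ lie in the disk $|z|\le 2$ since the coefficients of $P$ are bounded by $1$, so the factorization is valid over $\mathbf{C}$.

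For the packing-dimension corollary, fix $A>0$ and let $E_d$ be the set of $\lambda\in(1/2,1)$ lying within $\exp(-Ad)$ of some complex root of some polynomial in $\mathcal{P}_d$. Since $|\mathcal{P}_d|\le 3^{d+1}$ and each polynomial has at most $d$ roots, $E_d$ is covered by $d\cdot 3^{d+1}$ intervals of length $2\exp(-Ad)$. The first part of the theorem gives
\[
\{\lambda\in(1/2,1):\dim\nu_\lambda<1\}\subseteq\bigcup_{d_0\ge 1}\bigcap_{d\ge d_0}E_d,
\]
and for each $d_0$ the intersection $\bigcap_{d\ge d_0}E_d$ admits such a cover for every $d\ge d_0$, so its upper box-counting dimension (and hence its packing dimension) is at most $(\log 3)/A$. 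Countable stability of packing dimension yields the same bound for the union, and letting $A\to\infty$ produces packing dimension zero.
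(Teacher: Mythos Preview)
There is a genuine gap at the heart of the argument. You claim that the entropy method yields a nonzero $P\in\mathcal{P}_{n-1}$ with $|P(\lambda)|<\exp(-An^2)$, and you need precisely this quadratic exponent because your root-extraction step $\min_j|\lambda-\xi_j|\le |P(\lambda)|^{1/m}$ loses a factor of $m\le n-1$. But Hochman's inverse theorem does not deliver a quadratic rate: what it gives is that for every $c>0$ and all large $n$ there is such a $P$ with $|P(\lambda)|<\exp(-cn)$, i.e.\ super-exponential only in the sense of beating every fixed exponential. Feeding this into your pigeonhole over roots yields $|\lambda-\xi|<\exp(-c)$, a constant, and the argument collapses. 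The sentence about ``compounding across $\Theta(n)$ scales'' does not explain how a linear-in-$n$ entropy deficit could produce an $n^2$ exponent; the inverse theorem gives concentration at a positive proportion of scales, but that translates into a small minimum gap, not a product over scales. Your formulation of the inverse theorem is also off: the relevant statement concerns $\mu*\nu$ with a dichotomy at each scale (either $\mu$ is nearly uniform or $\nu$ is nearly atomic), not a self-convolution $\mu*\mu$.

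The paper closes this gap with a geometric input you are missing. By Jensen's formula (or a normal-families argument), every $P\in\mathcal{P}_{d-1}$ has at most $m$ roots in the disk $|z|<(1+\lambda)/2$, where $m$ depends only on $\lambda$ and \emph{not} on $d$ (Lemma~\ref{lm:dioph-trans}). With this, the conversion between root distance and polynomial value costs only a bounded factor in the exponent: if no root lies within $\exp(-Ad)$ of $\lambda$, then $|P(\lambda)|\ge\lambda^{Bd}$ for all $P\in\mathcal{P}_{d-1}$ and a fixed $B=B(A,\lambda)$. The paper then runs the contrapositive: this separation forces $H(\nu_\lambda^{(\lambda^d,1]};\lambda^{Bd})=d$, and Theorems~\ref{th:low-entropy-convolution} and~\ref{th:hochman3} convert that into $\dim\nu_\lambda=1$. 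Your argument is repairable along the same lines---replace the $\exp(-An^2)$ claim by $\exp(-cn)$ for arbitrarily large $c$, and insert the Jensen bound so that only $m=O(1)$ roots are candidates for the minimum---but as written it omits exactly the idea that makes the exponent bookkeeping work. The packing-dimension paragraph is correct.
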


We note that the packing dimension of a set is always at least as large as its Hausdorff dimension,
so the exceptional set is also of $0$ Hausdorff dimension.
In fact, to conclude the result for Hausdorff dimension it would be enough to know that there are infinitely many integers
$d$ such that the algebraic approximation exists at the corresponding scales.
Having the approximations at all scales allowed Hochman to also derive the following corollary.

\begin{corollary}[Hochman]
Suppose that there is a number $A>0$ such that the following holds for every sufficiently large integer $d$.
Let $\xi_1$ and $\xi_2$ be two numbers that are roots of polynomials in $\mathcal{P}_d$ (not necessarily the same one).
Then $|\xi_1-\xi_2|>\exp(-Ad)$.

Under this hypothesis, we have $\dim\nu_\lambda=1$ for any transcendental $\lambda\in(1/2,1)$.
\end{corollary}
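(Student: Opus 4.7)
The plan is to prove the corollary by contraposition: assuming $\lambda\in(1/2,1)$ is transcendental and $\dim\nu_\lambda<1$, I want to contradict the separation hypothesis. The only tool available at this point is Theorem~\ref{th:hochman}, which, for any prescribed rate $A'>0$, furnishes a root $\xi_d$ of some polynomial in $\mathcal{P}_d$ satisfying $|\lambda-\xi_d|<\exp(-A'd)$ at every sufficiently large scale $d$. Crucially, we need the approximations at \emph{all} sufficiently large $d$, not merely along a subsequence---this is exactly the strength of the packing-dimension statement of Theorem~\ref{th:hochman}. I choose $A'=3A$, where $A$ is the constant appearing in the separation hypothesis.

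The second step compares the approximations at scales $d$ and $2d$. Since any polynomial in $\mathcal{P}_d$ also lies in $\mathcal{P}_{2d}$, both $\xi_d$ and $\xi_{2d}$ are roots of polynomials in $\mathcal{P}_{2d}$. The triangle inequality gives
\[
|\xi_d-\xi_{2d}|\le|\lambda-\xi_d|+|\lambda-\xi_{2d}|<2\exp(-3Ad),
\]
which, for all sufficiently large $d$, is strictly smaller than $\exp(-A\cdot 2d)$. The separation hypothesis applied at scale $2d$ then forces $\xi_d=\xi_{2d}$.

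Chaining this identification along the sequence of scales $d, 2d, 4d, 8d,\ldots$, all of which are sufficiently large, I conclude that the approximant stabilizes: there is a single point $\xi$ (independent of the large parameter) with $\xi=\xi_{2^k d}$ for every $k\ge 0$. Since $|\lambda-\xi|=|\lambda-\xi_{2^k d}|<\exp(-3A\cdot 2^kd)$ can be made arbitrarily small, we obtain $\xi=\lambda$. But $\xi$ is, by construction, a root of a nonzero polynomial with integer coefficients, so $\lambda$ is algebraic, contradicting transcendence.

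The only real point to be careful about is the bookkeeping of constants and the simultaneous availability of approximations at the full geometric sequence of scales; since Theorem~\ref{th:hochman} provides approximations for \emph{every} sufficiently large $d$, the iteration poses no obstacle. I expect no substantive difficulty beyond this; the argument is a direct extraction of a limit from Hochman's Diophantine dichotomy.
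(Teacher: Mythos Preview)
Your argument is correct and follows essentially the same route as the paper: apply Theorem~\ref{th:hochman} with a large enough multiple of $A$, use the triangle inequality to compare approximants at two scales, invoke the separation hypothesis to force them to coincide, and iterate to trap $\lambda$ at an algebraic point. The only cosmetic difference is that the paper steps from $d$ to $d+1$ (using $A'=2A$) and shows by induction that $\xi_d=\xi_{d_0}$ for all $d\ge d_0$, whereas you double scales and take $A'=3A$; both iterations work equally well.
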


Indeed, this can be deduced from the theorem as follows.
If $\dim\nu_\lambda<1$, then Theorem \ref{th:hochman} yields a sequence of algebraic approximations $\{\xi_d\}_{d\ge d_0}$
such that $|\lambda-\xi_d|<\exp(-2Ad)$.
By induction, and using the hypothesis on the separation between the roots of polynomials in $\mathcal{P}_d$, one can show that $\xi_d=\xi_{d_0}$
for all $d$.
This leads to the conclusion that $\lambda=\xi_0$ is algebraic.

The hypothesis in the corollary is very reasonable given that the number
of roots of all polynomials in $\mathcal{P}_d$ is less than $d\cdot3^d$.
However, the best result available in the literature in this direction is the following.

\begin{theorem}[Mahler \cite{Mah}]\label{th:Mahler}
There is an absolute constant $C$ such that the following holds for all sufficiently large integers $d$.
Let $\xi_1$ and $\xi_2$ be two numbers that are roots of polynomials in $\mathcal{P}_d$ (not necessarily the same one).
Then $|\xi_1-\xi_2|>\exp(-Cd\log d)$.
\end{theorem}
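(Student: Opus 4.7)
The plan is to reduce the claim to a separation bound for roots of irreducible integer polynomials, and to derive that bound by combining the resultant or discriminant formalism with the elementary height estimate $M(P)\le\|P\|_2\le\sqrt{d+1}$ that holds for every $P\in\mathcal{P}_d$. Fix $P_1,P_2\in\mathcal{P}_d$ with $P_i(\xi_i)=0$ and $\xi_1\neq\xi_2$, and let $g_i\in{\bf Z}[x]$ be the minimal polynomial of $\xi_i$. Gauss's lemma forces $g_i\mid P_i$ in ${\bf Z}[x]$ with leading coefficient $\pm 1$; in particular $\deg g_i\le d$, $M(g_i)\le M(P_i)\le\sqrt{d+1}$, and Cauchy's bound gives $|\xi_i|\le 2$.

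Suppose first that $g_1\neq g_2$. Both being irreducible, they are coprime in ${\bf Q}[x]$, so $\mathrm{Res}(g_1,g_2)$ is a nonzero integer and $|\mathrm{Res}(g_1,g_2)|\ge 1$. Writing $\mathrm{Res}(g_1,g_2)=\pm\prod_j g_1(\beta_j)$, where $\beta_1=\xi_2,\beta_2,\ldots$ are the roots of $g_2$, the estimate $|g_1(\beta_j)|\le\|g_1\|_1\max(1,|\beta_j|)^{\deg g_1}\le(d+1)\max(1,|\beta_j|)^d$ combined with $\prod_j\max(1,|\beta_j|)=M(g_2)\le\sqrt{d+1}$ yields
\[
\prod_{j\neq 1}|g_1(\beta_j)|\le(d+1)^{d-1}M(g_2)^d\le(d+1)^{3d/2-1},
\]
hence $|g_1(\xi_2)|\ge(d+1)^{-(3d/2-1)}$. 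Conversely, factoring $g_1(\xi_2)=\pm\prod_i(\xi_2-\alpha_i)$ with $\alpha_1=\xi_1$ and using $|\xi_2-\alpha_i|\le 2\max(1,|\xi_2|)\max(1,|\alpha_i|)\le 4\max(1,|\alpha_i|)$ together with $\prod_{i\neq 1}\max(1,|\alpha_i|)\le M(g_1)\le\sqrt{d+1}$, I obtain the matching upper bound $|g_1(\xi_2)|\le 4^d\sqrt{d+1}\cdot|\xi_1-\xi_2|$. Combining the two inequalities produces $|\xi_1-\xi_2|\ge\exp(-Cd\log d)$.

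Suppose now that $g_1=g_2=:g$. Then $\xi_1$ and $\xi_2$ are distinct roots of the irreducible polynomial $g\in{\bf Z}[x]$, whose discriminant is a nonzero integer, so $|\mathrm{disc}(g)|\ge 1$. Mahler's original root-separation inequality, proved by factoring $\xi_1-\xi_2$ out of the Vandermonde determinant of the roots of $g$ and then applying Hadamard's inequality to the reduced matrix, states
\[
|\xi_1-\xi_2|\ge\sqrt{3}\cdot n^{-(n+2)/2}|\mathrm{disc}(g)|^{1/2}M(g)^{-(n-1)},
\]
where $n=\deg g\le d$. Inserting $|\mathrm{disc}(g)|\ge 1$ and $M(g)\le\sqrt{d+1}$ again yields the same $\exp(-Cd\log d)$ bound.

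The essential quantitative point, and what distinguishes this argument from a completely naive one, is that the Mahler measures in play are of only polynomial size $\sqrt{d+1}$; these absorb what would otherwise be an exponentially costly product of order $d^2$ pairwise root differences. The main obstacle in making the argument rigorous is therefore the bookkeeping in the case $g_1\neq g_2$: one has to arrange for every factor of the form $|\alpha_i-\beta_j|$ to be absorbed into a Mahler-measure term instead of being bounded crudely by a constant, since otherwise the estimate degrades from $\exp(-Cd\log d)$ to $\exp(-Cd^2)$.
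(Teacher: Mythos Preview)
The paper does not prove this theorem; it merely cites Mahler's paper \cite{Mah} and records that one may take $C=4$. So there is no ``paper's proof'' to compare against, and I will simply assess your argument on its own merits.

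Your Case~2 ($g_1=g_2$) is correct and is exactly Mahler's original root--separation inequality applied to the single minimal polynomial~$g$, using $M(g)\le\sqrt{d+1}$ and $|\mathrm{disc}(g)|\ge 1$.

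Your Case~1 ($g_1\neq g_2$), however, contains a genuine error. You write
\[
|g_1(\beta_j)|\le\|g_1\|_1\max(1,|\beta_j|)^{\deg g_1}\le(d+1)\max(1,|\beta_j|)^d,
\]
implicitly claiming $\|g_1\|_1\le d+1$. But $g_1$ is the \emph{minimal polynomial} of $\xi_1$, not $P_1$ itself, and a factor of a polynomial with $\{-1,0,1\}$ coefficients can have arbitrarily large coefficients. The cyclotomic factorisation $x^n-1=\prod_{e\mid n}\Phi_e(x)$ already shows this: $x^n-1\in\mathcal{P}_n$, yet the maximal coefficient of $\Phi_n$ is unbounded (and in fact exceeds any fixed power of $n$ infinitely often). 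The only general bound available is $\|g_1\|_1\le 2^{\deg g_1}M(g_1)\le 2^d\sqrt{d+1}$, and feeding this into your resultant estimate produces a factor $2^{d(n_2-1)}$, degrading the conclusion to $|\xi_1-\xi_2|\ge\exp(-Cd^2)$---precisely the degradation you warn against in your final paragraph. In other words, the ``bookkeeping obstacle'' you identify is not handled by your computation; the step that appears to handle it is the unjustified bound on $\|g_1\|_1$.

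The clean fix is to abandon the resultant route in Case~1 and instead unify the two cases: apply Mahler's inequality directly to the squarefree integer polynomial $f=g_1g_2$ (or, equivalently, to the radical of $P_1P_2$). Then $\deg f\le 2d$, $M(f)=M(g_1)M(g_2)\le d+1$, and $|\mathrm{disc}(f)|\ge 1$, so Mahler's bound gives
\[
|\xi_1-\xi_2|\ge\sqrt{3}\,(2d)^{-(2d+2)/2}(d+1)^{-(2d-1)}=\exp\bigl(-O(d\log d)\bigr),
\]
which is the desired estimate and is consistent with the value $C=4$ quoted in the paper. The point is that Mahler's Hadamard--Vandermonde argument controls \emph{all} pairwise root differences simultaneously through a single discriminant, avoiding the $n_1n_2$ separate crude bounds that doom the resultant approach.
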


One may take e.g. $C=4$ in this theorem.
Breuillard and the author \cite{BreV2} obtain the following information about Bernoulli convolutions of dimension less than $1$.

\begin{theorem}[Breuillard, Varj\'u]\label{th:BreV1}
Suppose $\lambda\in(1/2,1)$ is a number such that $\dim \nu_\lambda<1$.
Then there are infinitely many integers $d$, such that there is a number $\xi\in\bf R$
that is a root of a polynomial in $\mathcal{P}_d$, $\dim \nu_\xi<1$ and
\[
|\lambda-\xi|<\exp(-d^{\log^{(3)}(d)}).
\]
\end{theorem}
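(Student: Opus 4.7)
The plan is to combine Hochman's theorem (Theorem \ref{th:hochman}) with the entropy-versus-Mahler-measure estimates of \cite{BreV1} highlighted in the growth-in-groups discussion, and then extract a super-exponential rate via a quantitative diagonal argument. The content beyond Hochman is two-fold: the approximating root $\xi$ must itself satisfy $\dim \nu_\xi < 1$, and the approximation rate must be boosted from $\exp(-Ad)$ to $\exp(-d^{\log^{(3)}(d)})$.

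The first step I would carry out is a perturbation principle: if $|\lambda-\xi| < \exp(-Ad)$ for $A$ large compared with the deficit $1-\dim\nu_\lambda$, then $\dim\nu_\xi<1$ as well. The mechanism is that the level-$n$ convolutions for $n\le d$ associated to $\lambda$ and $\xi$ differ in Wasserstein distance by at most $d^{2}\exp(-Ad)$, so their Shannon entropies at every dyadic scale coarser than $\exp(-Ad/2)$ agree up to $o(d)$. Hochman's quantitative dimension-of-self-similar-measure criterion then ensures that the entropy gap at scale $d$ witnessing $\dim\nu_\lambda<1$ simultaneously witnesses $\dim\nu_\xi<1$. Feeding this into the Breuillard--Varj\'u entropy-vs-Mahler-measure inequality yields a uniform bound $M_\xi\le M_0$, where $M_0$ depends only on the deficit of $\nu_\lambda$. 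Consequently the pool of admissible approximants at scale $d$ consists of algebraic numbers of degree $\le d$ and Mahler measure $\le M_0$, a set of size $\exp(O(d))$. A short argument using that $\lambda$ is real and that complex roots of $P\in\mathcal{P}_d$ come in conjugate pairs lets one select a real representative, at the cost of a polynomial factor in the distance estimate.

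For the super-exponential rate I would invoke a diagonal argument, reading Hochman's theorem as a $\forall A\,\exists d_0(A)$ statement and running its proof with $A=A(d)$ growing along a subsequence rather than held fixed. The key auxiliary output required is a quantitative version of Hochman's theorem providing an explicit, slowly growing bound on $d_0(A)$. Balancing $A(d)$ to scale like $d^{\log^{(3)}(d)-1}$ converts the exponential rate $\exp(-Ad)$ into $\exp(-d^{\log^{(3)}(d)})$ along this subsequence, yielding infinitely many good $d$'s. The principal obstacle, and the technical heart of the argument, is precisely this effectivization: every invocation of the entropy-on-scales inverse theorem underlying Hochman's criterion must be tracked with explicit constants, and the gain coming from the Mahler-measure restriction on the candidate $\xi$'s must be balanced against the loss accumulated at each dyadic step. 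The peculiar function $\log^{(3)}$ is the output of this bookkeeping, reflecting how aggressively one can push $A$ before the quantitative Hochman estimate ceases to produce fresh approximants at new scales.
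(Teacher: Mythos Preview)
Your proposal has two substantive gaps.

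First, the perturbation principle is circular as stated. You want to deduce $\dim\nu_\xi<1$ from $|\lambda-\xi|<\exp(-Ad)$ together with $\dim\nu_\lambda<1$. The Wasserstein comparison you describe controls $H(\nu_\xi^{(d)};r)$ for scales $r$ much larger than $d^2\exp(-Ad)$, in particular for $r=\lambda^d$. But $\dim\nu_\xi<1$ is equivalent, via Theorem~\ref{th:Hochman-formula}, to $h_\xi<\log\xi^{-1}$, and $h_\xi$ is governed by the \emph{full} Shannon entropy $H(\nu_\xi^{(d)})$, which only satisfies $H(\nu_\xi^{(d)})\ge H(\nu_\xi^{(d)};\lambda^d)$. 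To reverse this inequality you would need Garsia separation (Theorem~\ref{th:Garsia-separation}) at scale $\lambda^d$, i.e.\ roughly $M_\xi^{-d}\gtrsim\lambda^d$, which presupposes the Mahler bound you plan to extract only \emph{after} establishing $\dim\nu_\xi<1$. The paper avoids this circle by a different mechanism: rather than starting from a single Hochman approximant, one considers the whole set of $P\in\mathcal{P}_{d-1}$ with $|P(\lambda)|<d^{-Cd}$ and invokes Mahler's root-separation estimate (Theorem~\ref{th:Mahler}) to show they share a \emph{common} root $\xi$ near $\lambda$. The cardinality of that set then bounds $H(\nu_\xi^{(d)})$ directly, producing a genuine dichotomy: either $h_\xi<\log\xi^{-1}$, or the weak-separation bound \eqref{eq:weak-separation} holds at $d$.

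Second, the super-exponential rate does not come from effectivising $d_0(A)$ in Theorem~\ref{th:hochman} and running a diagonal $A=A(d)$. If you attempt this, the constant $B$ in Lemma~\ref{lm:dioph-trans} grows with $A$, so the $\beta$ appearing in the analogue of \eqref{eq:hypothesis} tends to $0$, and Theorem~\ref{th:hochman3} as written gives nothing. What actually produces the rate is the \emph{quantitative} convolution estimate, Theorem~\ref{th:low-entropy-convolution}, applied iteratively with $\beta\sim c/\log d\to 0$: each convolution factor contributes an entropy gain of order $\beta/\log\beta^{-1}$, and the argument closes precisely because the series $\sum(n\log n\log\log n)^{-1}$ diverges. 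The exponent $\log^{(3)}d$ is the trace of that divergence rate. The gaps in the sequence of $d$'s for which \eqref{eq:weak-separation} holds are then controlled by the distances from $\lambda$ to the exceptional $\xi$'s, and this is what yields the claimed approximation.
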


Unlike Theorem \ref{th:hochman}, this result does not provide an algebraic approximation at every scale.
On the other hand, Theorem \ref{th:BreV1} provides much smaller error terms in the approximation.
Combining this with results on transcendence measures (see e.g. \cite{Wal}),
one can conclude that $\nu_\lambda$ has dimension $1$ for many classical constants in the role of $\lambda$,
giving the first such explicit examples among transcendental numbers.
\begin{corollary}[Breuillard, Varj\'u]
We have $\dim\nu_\lambda=1$ for any of
\[
\lambda\in\{\ln(2), e^{-1/2},\pi/4\}.
\]
\end{corollary}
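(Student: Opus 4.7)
The plan is to argue by contradiction: suppose $\dim \nu_\lambda < 1$ for some $\lambda \in \{\ln 2, e^{-1/2}, \pi/4\}$, and combine the strong algebraic approximation supplied by Theorem \ref{th:BreV1} with a classical transcendence measure to derive a contradiction.

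First, I would apply Theorem \ref{th:BreV1} to our chosen $\lambda$. This yields infinitely many integers $d$ and algebraic numbers $\xi_d$, each the root of some nonzero $P_d\in\mathcal{P}_d$, satisfying
\[
|\lambda-\xi_d|<\exp\bigl(-d^{\log^{(3)}(d)}\bigr).
\]
Since $P_d$ has degree at most $d$ and coefficients in $\{-1,0,1\}$, the minimal polynomial of $\xi_d$ over $\mathbb{Z}$ has degree at most $d$ and height bounded by an absolute constant (using a standard Mahler-type estimate for coefficients of factors of integer polynomials in terms of the Mahler measure of the parent polynomial; the parent polynomial has Mahler measure at most $\sqrt{d+1}$).

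Next, I would invoke the transcendence measures available in the literature for each of our constants (see, e.g., \cite{Wal}). For each of $\ln 2$, $e^{-1/2}$, and $\pi/4$, there exist absolute constants $C,a,b>0$ and an effective lower bound of the shape
\[
|\lambda-\xi|>\exp\bigl(-C\,d^{a}(\log H)^{b}\bigr)
\]
for every algebraic $\xi$ of degree at most $d$ and (naive) height at most $H$. Applied to $\xi=\xi_d$, which has bounded height, this reduces to $|\lambda-\xi_d|>\exp(-C'd^{a})$ for an absolute exponent $a$ and some constant $C'$ depending only on $\lambda$.

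Finally, I would compare the two inequalities. Since $\log^{(3)}(d)\to\infty$, for all sufficiently large $d$ we have $d^{\log^{(3)}(d)}>C'd^{a}$, so the upper bound $|\lambda-\xi_d|<\exp(-d^{\log^{(3)}(d)})$ contradicts the transcendence measure. This forces $\dim\nu_\lambda=1$.

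The only delicate point is locating a transcendence measure of the required polynomial-in-$d$ shape for each of the three constants; but effective measures of this quality are classical for $\ln 2$ (Baker), $e^{-1/2}$ (going back to Popken and refined by Mahler), and $\pi$ (Mahler, Waldschmidt), so the argument reduces to quoting the appropriate result and observing that $d^{\log^{(3)}(d)}$ eventually dominates any fixed power of $d$.
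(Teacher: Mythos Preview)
Your approach is correct and is exactly what the paper intends: the paper does not spell out a proof of this corollary but simply says to combine Theorem~\ref{th:BreV1} with the transcendence measures in \cite{Wal}, which is precisely your argument by contradiction.

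One small correction: the naive height of the minimal polynomial of $\xi_d$ is not bounded by an absolute constant. Mignotte-type bounds for factors of integer polynomials only give that a degree-$m$ divisor of $P_d\in\mathcal{P}_d$ has height at most $2^m M(P_d)\le 2^d\sqrt{d+1}$, which is exponential in $d$. This does not damage your argument: feeding $\log H=O(d)$ into a transcendence measure of the shape $|\lambda-\xi|>\exp(-C d^a(\log H)^b)$ still produces a lower bound $\exp(-C'd^{a+b})$, and $d^{\log^{(3)} d}$ eventually dominates any fixed power of $d$. Alternatively, you can bypass the height of the minimal polynomial entirely by quoting the transcendence measure in the form $|P(\lambda)|>\exp(-C d^a)$ for nonzero $P$ of degree at most $d$ and height $1$, and then passing from $|P_d(\lambda)|$ to $|\lambda-\xi_d|$ via the factorization argument used in the proof of Lemma~\ref{lm:dioph-trans}.
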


Another important feature of Theorem \ref{th:BreV1} is that we know that $\dim\nu_\xi<1$ for the approximants.
This allows the formulation of the following corollary.

\begin{corollary}[Breuillard, Varj\'u]\label{cr:BreV1}
We have
\[
\{\lambda\in(1/2,1):\dim\nu_\lambda<1\}\subset\overline{\{\xi\in(1/2,1)\cap\overline{\bf Q}:\dim\nu_{\xi}<1\}},
\]
where $\overline{\bf Q}$ denotes the set of algebraic numbers and $\overline{\{\cdot\}}$ denotes closure in the
standard topology of real numbers.
\end{corollary}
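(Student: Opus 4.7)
The plan is to derive the corollary as an essentially immediate consequence of Theorem~\ref{th:BreV1}. Fix $\lambda \in (1/2,1)$ with $\dim \nu_\lambda < 1$; I must show $\lambda$ lies in the closure of the set $S := \{\xi \in (1/2,1)\cap \overline{\bf Q} : \dim \nu_\xi < 1\}$.

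First I dispose of the trivial case. If $\lambda$ itself is algebraic, then $\lambda \in S$ and there is nothing further to prove, since every point of a set belongs to its closure. So I may assume henceforth that $\lambda$ is transcendental.

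Now I apply Theorem~\ref{th:BreV1} to $\lambda$. It produces an infinite set $D$ of positive integers and, for each $d \in D$, a real number $\xi_d$ that is a root of a polynomial in $\mathcal{P}_d$, satisfies $\dim \nu_{\xi_d} < 1$, and lies extremely close to $\lambda$:
\[
|\lambda - \xi_d| < \exp\bigl(-d^{\log^{(3)}(d)}\bigr).
\]
Each such $\xi_d$ is algebraic, since it is a root of a nonzero polynomial with integer coefficients, so $\xi_d \in \overline{\bf Q}$. As $d \to \infty$ within $D$ the right-hand side tends to $0$, so $\xi_d \to \lambda$. Because $\lambda$ is an interior point of $(1/2,1)$, it follows that $\xi_d \in (1/2,1)$ for all sufficiently large $d \in D$, and consequently $\xi_d \in S$ for all sufficiently large $d \in D$. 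This exhibits $\lambda$ as the limit of a sequence of elements of $S$, proving $\lambda \in \overline{S}$, as required.

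The main ``obstacle'' here is in fact no obstacle at all: the corollary is a clean formal consequence of Theorem~\ref{th:BreV1}. The decisive input, which is not supplied by Hochman's Theorem~\ref{th:hochman}, is the assertion that the approximating algebraic numbers $\xi_d$ themselves satisfy $\dim \nu_{\xi_d} < 1$; the remainder is straightforward bookkeeping about convergence and the openness of the interval $(1/2,1)$.
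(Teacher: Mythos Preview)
Your proof is correct and follows exactly the approach the paper intends: the corollary is presented as an immediate consequence of Theorem~\ref{th:BreV1}, using precisely the feature you highlight, namely that the algebraic approximants $\xi$ themselves satisfy $\dim\nu_\xi<1$. The case distinction algebraic/transcendental is not strictly necessary (Theorem~\ref{th:BreV1} applies to any $\lambda$ with $\dim\nu_\lambda<1$), but it does no harm.
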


This result suggests that understanding the dimension of Bernoulli convolutions for algebraic parameters may lead
to information about the dimension also for transcendental parameters.
Here we note that the only known examples of Bernoulli convolutions for $\lambda\in(1/2,1)$
(algebraic or transcendent) of dimension less than $1$ are the
inverses of Pisot numbers and that the set of Pisot numbers is known to be closed
\cite{Sal2}.
If there are no further examples among the algebraic parameters, then there are no further examples at all.

The fact that Bernoulli convolutions have dimension less than $1$ for inverses of Pisot numbers
can be traced back to Garsia \cite{Gar2} in some form.
(He proved that $h_\lambda<\log\lambda^{-1}$ with the below notation.)
The dimension and finer properties of Bernoulli convolutions for special algebraic parameters (for inverses of Pisot numbers
in most cases) have been
studied by many authors, including
\cite{AY}, \cite{AZ}, \cite{Bov}, \cite{Fen}, \cite{HS}, \cite{Hu}, \cite{JSS}, \cite{Lal}, \cite{Lau}, \cite{LN1}, \cite{LN2} and \cite{LP}.

Hochman \cite{Hoc1} also gave a formula for the dimension of Bernoulli convolutions with algebraic parameters.
This formula is in terms of the entropy of the random walk on the semigroup generated by the transformations
$x\mapsto\lambda x\pm 1$; a quantity introduced by Garsia to the study of Bernoulli convolutions, which we define now.
For an integer $n$, we denote by $\nu_\lambda^{(n)}$ the distribution of the random variable $\sum_{j=0}^{n-1} X_j\lambda^j$, where
$X_j$ are independent unbiased $\pm1$ valued random variables.
Denoting by $H(\cdot)$ the Shannon entropy of a discrete probability measure, we define
\[
h_\lambda=\lim_{n\to \infty}\frac{1}{n}H(\nu_\lambda^{(n)}).
\]
(It is easy to see that the sequence $H(\nu_\lambda^{(n)})$ is subadditive, hence the limit exists and is equal to the infimum.)

\begin{theorem}[Hochman]\label{th:Hochman-formula}
Let $\lambda\in(0,1)$ be an algebraic number.
Then
\[
\dim\nu_\lambda=\min\Big\{1,\frac{h_\lambda}{\log\lambda^{-1}}\Big\}.
\]
\end{theorem}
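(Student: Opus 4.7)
The plan is to establish matching upper and lower bounds, with only the lower bound requiring the algebraic hypothesis. For the upper bound, valid for every $\lambda\in(0,1)$, I would iterate the self-similarity identity \eqref{eq:selfsimilar} to obtain
\[
\nu_\lambda=\nu_\lambda^{(n)}*S_n,
\]
where $S_n$ is the push-forward of $\nu_\lambda$ under the scaling $x\mapsto\lambda^n x$. Writing $(Y,Z)$ for independent variables with $Y\sim\nu_\lambda^{(n)}$, $Z\sim S_n$, and $I$ for the cell of the partition of $\bf R$ into intervals of length $\lambda^n$ that contains $Y+Z$, the support of $Z$ has length $O(\lambda^n)$, so for each $y$ at most $O(1)$ cells have positive conditional probability. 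Hence $H(I)\le H(Y)+H(I\mid Y)\le H(\nu_\lambda^{(n)})+O(1)$. Dividing by $n\log\lambda^{-1}$, letting $n\to\infty$, and invoking the exact dimensionality of $\nu_\lambda$ (which identifies entropy dimension with Hausdorff dimension) yields $\dim\nu_\lambda\le h_\lambda/\log\lambda^{-1}$. The bound $\dim\nu_\lambda\le 1$ is immediate.

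For the lower bound, I would assume $\dim\nu_\lambda<1$ and aim to conclude $\dim\nu_\lambda\ge h_\lambda/\log\lambda^{-1}$. Let $H_r(\mu)$ denote the Shannon entropy of $\mu$ relative to the dyadic partition of $\bf R$ of mesh $2^{-r}$; by exact dimensionality $H_r(\nu_\lambda)/(r\log 2)\to\dim\nu_\lambda$. The central engine is Hochman's inverse theorem for entropy under convolution, which asserts, roughly, that if $H_r(\mu*\eta)-H_r(\mu)$ is substantially smaller than the entropy of $\eta$ at scale $2^{-r}$, then at a positive proportion of intermediate scales $k\le r$ the measure $\eta$ must be concentrated, in a local and conditional sense, on very few atoms. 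I would apply this with $\mu=S_n$ and $\eta=\nu_\lambda^{(n)}$ and $r$ chosen so that $2^{-r}\approx\lambda^n$: the assumption $\dim\nu_\lambda<1$ produces a definite shortfall in $H_r(\nu_\lambda)$ and hence in the convolution gain, which triggers the theorem and forces multi-scale atomic concentration of $\nu_\lambda^{(n)}$.

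Here the algebraicity of $\lambda$ enters decisively. By Theorem \ref{th:Garsia-separation} any two distinct atoms of $\nu_\lambda^{(n)}$ differ by at least $cn^{-m}M_\lambda^{-n}$, an essentially exponential lower bound governed by a single parameter. Consequently $\nu_\lambda^{(n)}$ exhibits only one genuine resolution scale: below $M_\lambda^{-n}$ it is purely atomic and carries its full entropy $H(\nu_\lambda^{(n)})\sim nh_\lambda$, while above that scale its atoms cluster rigidly into points without any internal substructure. This dichotomy is incompatible with the multi-scale concentration demanded by the inverse theorem unless the slack $1-\dim\nu_\lambda$ is already accounted for by the non-negative quantity $1-h_\lambda/\log\lambda^{-1}$. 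Making this incompatibility quantitative produces the desired inequality, which together with the upper bound gives the formula.

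The hardest step is the deployment of Hochman's inverse theorem itself, a deep multi-scale entropy statement playing the role for self-similar measures that Bourgain's discretised sum--product theorem plays for Hausdorff dimension; its proof requires delicate bookkeeping of conditional entropies across nested scales. A secondary, more routine difficulty is matching the dyadic scales intrinsic to the inverse theorem with the non-dyadic scales $\lambda^n$ and $M_\lambda^{-n}$ natural to the problem, and absorbing the polynomial prefactor $n^{-m}$ of Garsia's separation bound without affecting the leading exponential rate.
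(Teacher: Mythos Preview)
Your upper bound and your identification of the two key ingredients---Garsia's separation (Theorem~\ref{th:Garsia-separation}) and Hochman's inverse/entropy-gain theorem---are correct and match the paper. But the way you set up the inverse-theorem step does not work. With $\mu=S_n$, $\eta=\nu_\lambda^{(n)}$ and $2^{-r}\approx\lambda^n$, the measure $S_n$ is supported in $O(1)$ cells, so $H_r(S_n)=O(1)$ and the convolution gain is $H_r(\mu*\eta)-H_r(\mu)\approx H_r(\nu_\lambda)\approx r\dim\nu_\lambda$. That is a \emph{large} gain, of order $r$; the hypothesis of the inverse theorem (gain $<\delta r$) is not satisfied and nothing is triggered. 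The assumption $\dim\nu_\lambda<1$ only tells you the gain is below $r$, not that it is $o(r)$. (Nor is there a shortfall relative to $H_r(\eta)$: since $S_n$ fits in $O(1)$ cells, $H_r(\nu_\lambda^{(n)})\approx H_r(\nu_\lambda)$ as well.) Your further claim that above scale $M_\lambda^{-n}$ the atoms of $\nu_\lambda^{(n)}$ ``cluster rigidly into points without any internal substructure'' is also false: Garsia's bound is only a \emph{lower} bound on separation and says nothing about how the atoms coalesce at intermediate scales---that coalescence pattern is precisely the unknown.

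The paper (following Hochman) works instead in the window of scales between $\lambda^d$ and $\lambda^{Bd}\sim M_\lambda^{-d}$. There one takes $\mu=\nu_\lambda^{(\lambda^{Bd},\lambda^d]}$, which at those scales is a rescaled copy of (a truncation of) $\nu_\lambda$ and therefore inherits the non-uniformity $H(\mu;s|2s)<1-\alpha$ from \eqref{eq:ent-digit}; and $\nu=\nu_\lambda^{(\lambda^d,1]}$, whose conditional entropy between $\lambda^{Bd}$ and $\lambda^d$ is bounded below via Lemma~\ref{lm:dioph-alg} (this is where Garsia enters, giving \eqref{eq:first-step} when $h_\lambda\ge\log\lambda^{-1}$). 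The entropy-gain estimate Theorem~\ref{th:low-entropy-convolution} then yields a definite increment for $\mu*\nu$, and comparing with the convergence of $d^{-1}H(\nu_\lambda;\lambda^d)$ as in Theorem~\ref{th:hochman3} produces the contradiction. The correction to your plan is thus to shift the scale window down to $[\lambda^{Bd},\lambda^d]$ and to let the role of $\mu$ be played by a measure that actually looks like $\nu_\lambda$ there, so that $\dim\nu_\lambda<1$ becomes the non-uniformity hypothesis of the convolution lemma rather than a ``shortfall in the gain'' that does not exist at the scale you chose.
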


It is convenient for us to adopt the normalization that $\log$ (which also appears in the definition of entropy)
is the base $2$ logarithm.
The value of $h_\lambda$ is maximal, when 
the semigroup generated by $x\mapsto\lambda x\pm 1$ is free.
Then $\nu_\lambda^{(n)}$ is supported on $2^n$ atoms, each of which have equal weight.
With our normalization, $h_\lambda=1$ in this case.
Most algebraic numbers are not roots of polynomials in $\mathcal{P}_d$, for example this holds for all the rationals (with the exception of $\pm1$
and $0$) and we have $h_\lambda=1$ for these.
Hence Hochman's formula provides plenty of explicit examples of Bernoulli convolutions with dimension $1$.

The quantity $h_\lambda$ has been studied by Breuillard and the author \cite{BreV1}.
They gave the following bounds in terms of the Mahler measure $M_\lambda$.

\begin{theorem}[Breuillard, Varj\'u]\label{th:BreV2}
If $\lambda$ is an algebraic number, we have
\[
0.44\cdot\min\{1,\log M_\lambda\}\le h_\lambda\le\min\{1,\log M_\lambda\}.
\]
\end{theorem}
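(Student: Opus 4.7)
Both upper inequalities are direct consequences of Theorem~\ref{th:Garsia-separation}. The bound $h_\lambda\le 1$ is immediate since $\nu_\lambda^{(n)}$ is supported on at most $2^n$ atoms, giving $H(\nu_\lambda^{(n)})\le n$. For $h_\lambda\le\log M_\lambda$, distinct atoms of $\nu_\lambda^{(n)}$ differ by $2P(\lambda)$ for some nonzero $P\in\mathcal{P}_{n-1}$, so by Theorem~\ref{th:Garsia-separation} they are separated by at least $cn^{-m}M_\lambda^{-(n-1)}$. Since $\supp(\nu_\lambda^{(n)})\subset[-(1-\lambda)^{-1},(1-\lambda)^{-1}]$, the support has at most $O(n^m M_\lambda^n)$ atoms, and the upper bound follows from $H(\nu_\lambda^{(n)})\le\log|\supp\nu_\lambda^{(n)}|$ after dividing by $n$ and taking $n\to\infty$.

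\paragraph{Lower bound: strategy.}
For the lower bound I would work with the joint Galois embedding. Let $\lambda=\lambda_1,\ldots,\lambda_k$ be the Galois conjugates of $\lambda$, assumed an algebraic integer for simplicity, and set
\[
\Phi_n\colon\{-1,1\}^n\to\mathbb{C}^k,\qquad(\epsilon_j)\mapsto\Bigl(\sum_{j<n}\epsilon_j\lambda_i^j\Bigr)_{i=1}^k.
\]
By Garsia's kernel argument (compare the proof of Theorem~\ref{th:Garsia}), $\Phi_n$ identifies two sign sequences only if their difference is divisible in $\mathbb{Z}[x]$ by the minimal polynomial of $\lambda$, so the pushforward $\mu_n$ of the uniform measure on $\{-1,1\}^n$ satisfies $H(\mu_n)=n-O(1)$. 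The measure $\nu_\lambda^{(n)}$ is the first marginal of $\mu_n$, and the plan is to show that the remaining marginals cannot absorb too much entropy: for conjugates $\lambda_i$ with $|\lambda_i|>1$, the $i$-th marginal lives in a disk of radius $\sim|\lambda_i|^n$, while for $|\lambda_i|\le 1$ it lives in a bounded region. Combined with the Garsia-type separation at $\lambda_i$, I would obtain bounds of the form $H(\mathrm{marginal}_i)\le n\log\max(1,|\lambda_i|)+O(\log n)$ in the expanding case, and $o(n)$ in the contracting case.

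\paragraph{Bootstrapping to the correct constant.}
The naive subadditivity $H(\mu_n)\le H(\nu_\lambda^{(n)})+\sum_{i\ge 2}H(\mathrm{marginal}_i)$ together with the above marginal bounds yields only the inequality $h_\lambda\ge 1-\bigl(\log M_\lambda-\log\max(1,|\lambda_1|)\bigr)$, which is vacuous once $M_\lambda\ge 2$. To reach the claimed form $h_\lambda\ge 0.44\min\{1,\log M_\lambda\}$, which is \emph{linear} in $\log M_\lambda$ across the full range, I would combine the marginal argument with the subadditivity of $H(\nu_\lambda^{(n)})$ across scales: apply the Galois-embedding estimate not to $\nu_\lambda^{(n)}$ itself but to suitably rescaled self-convolutions, and use $H(\nu_\lambda^{(n+m)})\le H(\nu_\lambda^{(n)})+H(\nu_\lambda^{(m)})$ together with a Pl\"unnecke/entropy-increment inequality to turn the single-scale deficit into a uniform lower bound.

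\paragraph{Main obstacle.}
The crux of the argument is the passage from the naive Galois estimate, which degrades badly for $M_\lambda\ge 2$, to the correct linear dependence on $\min\{1,\log M_\lambda\}$. This requires a \emph{quantitative} entropy-increment inequality that is stable under convolution, balanced against the Garsia separation at each conjugate. Extracting the explicit constant $0.44$ amounts to optimizing the choice of scales and weighting the contributions of the expanding and contracting conjugates of $\lambda$; I expect this optimization, rather than any single conceptual step, to constitute the bulk of the technical work.
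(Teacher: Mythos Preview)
Your upper bound is fine and is essentially the standard argument: Garsia's separation (Theorem~\ref{th:Garsia-separation}) bounds the number of atoms of $\nu_\lambda^{(n)}$ by $O(n^m M_\lambda^{n})$, and entropy is at most the logarithm of the support size.

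The lower bound strategy, however, has a fatal circularity. You form the Galois embedding $\Phi_n:\{-1,1\}^n\to\mathbb{C}^k$ and want to exploit that the pushforward $\mu_n$ has large entropy while the side marginals are small. But the first coordinate already determines the others: if $\alpha=\sum_j\epsilon_j\lambda^j$ then $\sum_j\epsilon_j\lambda_i^j=\sigma_i(\alpha)$ for the Galois automorphism $\sigma_i$ sending $\lambda\mapsto\lambda_i$, regardless of which representation $(\epsilon_j)$ one chooses. Hence $\Phi_n$ and its first coordinate projection have \emph{identical} fibers, and $H(\mu_n)=H(\nu_\lambda^{(n)})$ exactly. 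Your claim $H(\mu_n)=n-O(1)$ would thus force $h_\lambda=1$, which is false for instance when $\lambda^{-1}$ is Pisot. The subadditivity inequality $H(\mu_n)\le\sum_i H(\text{marginal}_i)$ becomes $H(\nu_\lambda^{(n)})\le k\cdot H(\nu_\lambda^{(n)})$, which carries no information. Nothing in the bootstrapping paragraph repairs this, because there is no entropy surplus to redistribute.

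The survey does not actually prove this theorem; it quotes \cite{BreV1}. The argument there does use the Galois structure, but through a different mechanism: one observes that $h_\lambda=h_{\lambda_i}$ for every conjugate $\lambda_i$ (same collision pattern), and then bounds $h_{\lambda_i}$ \emph{from below} directly when $|\lambda_i|>1$, essentially because for such $\lambda_i$ the leading term $X_{n-1}\lambda_i^{n-1}$ dominates and one can partially extract digits. Non-archimedean places handle the leading coefficient when $\lambda$ is not an algebraic integer. The constant $0.44$ emerges from optimizing this single-place lower bound over $|\lambda_i|\in(1,2)$, not from an additive combinatorics increment. So the correct route goes through the \emph{equality} $h_\lambda=h_{\lambda_i}$ rather than an entropy inequality across marginals.
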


We add that the upper bound is often strict.
Indeed, generalizing Garsia's  \cite{Gar1}, \cite{Gar2} arguments, Breuillard and the author proved that
$h_\lambda<\log M_\lambda$ if $\lambda$ has no Galois conjugates on the unit circle.

Recall that Lehmer's conjecture asserts the existence of a constant $c>0$ such that $M_\lambda>1+c$
for all algebraic numbers that are not roots of unity.
Combining this with Theorem \ref{th:BreV2}, we obtain a positive constant lower bound for $h_\lambda$.
Then Theorem \ref{th:Hochman-formula} implies that $\dim\nu_\lambda=1$ for all algebraic $\lambda\in(1-c,1)$
for some $c>0$.
Combining this with Corollary \ref{cr:BreV1}, we obtain the following.

\begin{corollary}[Breuillard, Varj\'u]
If Lehmer's conjecture holds, then there is a positive number $c>0$ such that
$\dim\nu_\lambda=1$ for all $\lambda\in(1-c,1)$.
\end{corollary}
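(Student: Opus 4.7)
The plan is to derive the corollary by chaining together Theorem~\ref{th:BreV2}, Theorem~\ref{th:Hochman-formula}, and Corollary~\ref{cr:BreV1}, treating algebraic and transcendental parameters in turn.

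First, I would handle algebraic $\lambda$. Lehmer's conjecture supplies a constant $c_0 > 0$ such that $M_\lambda \ge 1 + c_0$ for every algebraic number that is not a root of unity, and since no element of $(1/2,1)$ is a root of unity, this lower bound applies uniformly to every algebraic $\lambda \in (1/2,1)$. Feeding it into the lower bound of Theorem~\ref{th:BreV2} yields
\[
h_\lambda \ge h_0 := 0.44 \cdot \min\{1, \log(1 + c_0)\} > 0
\]
for all such $\lambda$. Now choose $c \in (0, 1/2)$ small enough that $\log \lambda^{-1} \le h_0$ whenever $\lambda \in (1 - c, 1)$; with the base-$2$ convention adopted in the paper, any $c \le 1 - 2^{-h_0}$ works. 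For algebraic $\lambda$ in this interval, Theorem~\ref{th:Hochman-formula} then gives
\[
\dim \nu_\lambda = \min\Big\{1, \frac{h_\lambda}{\log \lambda^{-1}}\Big\} = 1,
\]
since the ratio is at least $h_0/h_0 = 1$.

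To remove the algebraicity assumption I would invoke Corollary~\ref{cr:BreV1}. If some $\lambda \in (1 - c, 1)$ satisfied $\dim \nu_\lambda < 1$, that corollary would force $\lambda$ to lie in the closure of the set of algebraic $\xi \in (1/2, 1)$ with $\dim \nu_\xi < 1$; but the previous step shows this set is disjoint from the open interval $(1 - c, 1)$, and hence its closure cannot contain $\lambda$. This contradiction finishes the argument. The deduction itself is essentially a formal composition, so there is no serious obstacle at this stage; the genuinely difficult content is already packaged into Hochman's dimension formula, the entropy estimate of Theorem~\ref{th:BreV2}, and the approximation-by-algebraics inclusion of Corollary~\ref{cr:BreV1}, with Lehmer's conjecture providing the uniform positivity of $\log M_\lambda$ that drives everything.
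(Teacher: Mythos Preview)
Your proposal is correct and follows exactly the route sketched in the paper: Lehmer's conjecture plus Theorem~\ref{th:BreV2} gives a uniform lower bound on $h_\lambda$, Theorem~\ref{th:Hochman-formula} then yields $\dim\nu_\lambda=1$ for algebraic $\lambda$ in a short interval near $1$, and Corollary~\ref{cr:BreV1} upgrades this to all $\lambda$. Your final step---observing that the set of bad algebraic parameters, being disjoint from the open interval $(1-c,1)$, has closure disjoint from it as well---is the right way to make the application of Corollary~\ref{cr:BreV1} precise.
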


\subsection{Results on absolute continuity}

Shmerkin \cite{Shm} achieved the following important result on absolute continuity of $\nu_\lambda$ for typical parameters.

\begin{theorem}[Shmerkin]
The set
\[
\{\lambda\in(1/2,1):\text{ $\nu_\lambda$ is singular}\}
\]
is of $0$ Hausdorff dimension.
\end{theorem}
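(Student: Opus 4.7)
The strategy is to prove the stronger assertion that, outside a set of $\lambda\in(1/2,1)$ of Hausdorff dimension zero, $\nu_\lambda$ has a density in $L^q$ for some $q>1$; this is enough since an $L^q$ function on a bounded set is in $L^1$. Working with the $L^q$ dimension $D(\nu_\lambda,q)$ (defined via the decay of $L^q$ norms of mollifications of $\nu_\lambda$ as the smoothing scale shrinks), it suffices to show that $\{\lambda\in(1/2,1):D(\nu_\lambda,q)<1\}$ has Hausdorff dimension zero for each $q>1$, and then let $q\downarrow 1$.

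The starting point is the iterated self-similar identity
\[
\nu_\lambda=\nu_\lambda^{(n)}*S_{\lambda^n}(\nu_\lambda),
\]
where $S_r$ denotes dilation by $r$ and $\nu_\lambda^{(n)}$ is the distribution of the partial sum $\sum_{j=0}^{n-1}X_j\lambda^j$. By Theorem \ref{th:hochman}, outside a set of packing dimension zero one has $\dim\nu_\lambda=1$, and hence the entropies $H(\nu_\lambda^{(n)})$ are asymptotically equal to $n$. Informally, for such $\lambda$ the measure $\nu_\lambda^{(n)}$ is as spread out as any $2^n$-atom measure can be at scale $\lambda^n$.

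The crucial new ingredient is an $L^q$-inverse theorem of Shmerkin, which is a counterpart of Hochman's entropy-growth theorem for $L^q$ norms: if $\mu$ is a finitely supported measure of near-maximal entropy at scale $\delta$, and if $\|\mu*\nu\|_q$ is not appreciably smaller than the value predicted by $\nu$ being spread at that scale, then $\nu$ itself must be concentrated on a union of few $\delta$-intervals. Applied to the identity above with $\mu=\nu_\lambda^{(n)}$ and $\delta=\lambda^n$, a failure of $D(\nu_\lambda,q)$ to equal $1$ at scale $\lambda^n$ would force $\nu_\lambda$ (via its rescaled copy $S_{\lambda^n}(\nu_\lambda)$) to concentrate at that scale for many $n$. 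Running this at a logarithmic density of scales and invoking the algebraic approximation conclusion from Theorem \ref{th:hochman} traps such $\lambda$, for infinitely many $d$, inside $\exp(-cd)$-neighbourhoods of roots of polynomials in $\mathcal{P}_d$. A covering estimate summing the standard bound $|{\rm roots}|\le d\cdot 3^d$ over $d$ shows that this exceptional set has Hausdorff dimension zero.

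The principal obstacle is the $L^q$-inverse theorem itself. Hochman's argument for entropy rests on the submodularity inequality $H(\mu*\nu)\le H(\mu)+H(\nu)$ and the additivity of entropy across conditionally independent scales; no such clean identity is available for $L^q$ norms. Shmerkin's route is a delicate multi-scale decomposition of both $\mu$ and $\nu$ into dyadic level sets on which the local density is nearly constant, combined with counting arguments of additive-combinatorial flavour to locate the concentrated factor. A secondary technical point is the step from the asymptotic identity $D(\nu_\lambda,q)=1$ to the existence of an $L^q$ density; this needs a uniform bound on the $L^q$ norms of $\nu_\lambda$ mollified at every scale $r$, rather than merely an asymptotic rate, so the quantitative form of the inverse theorem must be pushed to give strict $L^q$ regularity rather than just a dimensional statement.
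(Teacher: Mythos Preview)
Your route is not the one the paper describes, and it contains a real gap at precisely the point you yourself flag.

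The paper summarises Shmerkin's argument as follows: decompose $\nu_\lambda$ as a convolution of two self-similar measures (by partitioning the index set of the defining series), apply Theorem~\ref{th:hochman} to one factor to obtain dimension $1$ outside a set of Hausdorff dimension $0$, apply the Erd\H os--Kahane result (Section~\ref{sc:Fourier}) to the other factor to obtain power Fourier decay outside a set of Hausdorff dimension $0$, and then use the elementary fact that the convolution of a measure of dimension $1$ with a measure having power Fourier decay is absolutely continuous (e.g.\ via finiteness of an energy integral, giving an $L^2$ density).

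You instead invoke an $L^q$ inverse theorem; this is machinery from a \emph{later} paper of Shmerkin, not the one cited here, and even there it delivers $L^q$-dimensional information rather than absolute continuity. The flaw is in your opening reduction: showing $D(\nu_\lambda,q)=1$ for each $q>1$ outside a dimension-zero set does \emph{not} give an $L^q$ density, since $L^q$ dimension $1$ constrains only the subpolynomial growth rate of the mollified $L^q$ norms, not their boundedness. You call this a ``secondary technical point,'' but it is the heart of the matter and cannot be resolved by ``pushing the quantitative form of the inverse theorem''---that theorem is intrinsically a dimensional statement. In Shmerkin's actual proof the Erd\H os--Kahane Fourier-decay factor is exactly the ingredient that bridges dimension $1$ to absolute continuity, and your proposal offers no substitute for it.
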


Shmerkin's proof is based on his observation that the convolution of a self-similar measure of dimension $1$ with another one
that has a polynomial Fourier decay is absolutely continuous.
He then decomposed $\nu_\lambda$ as the convolution of two selfsimilar measures in a suitable way
so that he could apply Hochman's result to one of them and the result of Erd\H os and Kahane
(discussed in Section \ref{sc:Fourier}) to the other.
This argument yields not only that $\nu_\lambda$ is absolutely continuous (if $\lambda$ is outside the exceptional set)
but also that it has some fractional derivatives
in $L^p$ for some $p>1$ (depending on $\lambda$).
Shmerkin and Solomyak \cite{ShmS1}, \cite{ShmS2} extended these ideas to more general self-similar measures.

We conclude with the following result from \cite{Var}, which gives the first new examples
of absolutely continuous Bernoulli convolutions since \cite{Gar1}.

\begin{theorem}[Varj\'u]\label{th:ac}
For every $\varepsilon>0$, there is $c>0$ such that the following holds.
Let $\lambda<1$ be an algebraic number and suppose that
\[
\lambda>1-c\min\{\log M_\lambda,(\log M_\lambda)^{-1-\varepsilon}\}.
\]
Then $\nu_\lambda$ is absolutely continuous and has a density in $L\log L$.
\end{theorem}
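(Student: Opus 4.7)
The plan is to first use the entropy lower bound of Theorem~\ref{th:BreV2} together with Hochman's dimension formula (Theorem~\ref{th:Hochman-formula}) to show that the hypothesis already forces $\dim\nu_\lambda=1$ with a quantitative entropy surplus, and then to upgrade this surplus to absolute continuity with density in $L\log L$ via a convolution-and-iteration scheme in the spirit of Hochman and Shmerkin.

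As a first step I would check that the hypothesis $1-\lambda<c\min\{\log M_\lambda,(\log M_\lambda)^{-1-\varepsilon}\}$ gives a uniform lower bound on $h_\lambda/\log\lambda^{-1}$. From Theorem~\ref{th:BreV2} we have $h_\lambda\ge 0.44\min\{1,\log M_\lambda\}$, and since $\log\lambda^{-1}\asymp 1-\lambda$ for $\lambda$ near $1$, direct comparison produces an entropy gap of order $\log M_\lambda$ when $\log M_\lambda\le 1$ and of order $(\log M_\lambda)^{1+\varepsilon}$ when $\log M_\lambda>1$, provided $c$ is sufficiently small. In particular Hochman's formula yields $\dim\nu_\lambda=1$, but the quantitative version of the gap is what must be converted into a regularity statement.

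The bulk of the argument is to use this entropy surplus to establish absolute continuity. I would use the decomposition of $\nu_\lambda$ coming from iterating the self-similar identity \eqref{eq:selfsimilar} $n$ times, namely
\[
\nu_\lambda=\nu^{(n)}_\lambda * T_{\lambda^n}(\nu_\lambda),
\]
where $\nu^{(n)}_\lambda$ is supported on the finite set $\{\sum_{j<n}\epsilon_j\lambda^j:\epsilon_j\in\{\pm 1\}\}$ and $T_{\lambda^n}$ denotes the scaling $x\mapsto\lambda^n x$. The discrete factor carries entropy $\approx n h_\lambda$ on a set of diameter $O(1)$ at atomic scale $\asymp\lambda^n$, while the second factor is a rescaled copy of $\nu_\lambda$ of the same dimension. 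The plan is then to invoke a Hochman-type inverse theorem for convolutions: if the atoms of $\nu^{(n)}_\lambda$ are sufficiently well-spread at scale $\lambda^n$, the convolution with $T_{\lambda^n}(\nu_\lambda)$ smoothens $\nu_\lambda$ enough to produce an $L\log L$ density, with the smoothing rate governed by the entropy gap established above. Summing the resulting dyadic estimates along $r_n=\lambda^n$ and using the entropy-definition of $L\log L$ closes the argument.

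The main obstacle is quantifying the rate at which the entropy surplus propagates through the convolution to give an $L\log L$ density uniformly in $M_\lambda$. When $M_\lambda$ is large, $h_\lambda$ is close to $1$ but the allowed range of $\lambda$ shrinks like $(\log M_\lambda)^{-1-\varepsilon}$; the exponent $-1-\varepsilon$ reflects the need to iterate the convolution step across many scales to close the gap between dimension one and absolute continuity, with each iteration costing a controlled power of $\log M_\lambda$. Making this iteration tight enough to yield $L\log L$ regularity, rather than merely $L^p$ for some $p>1$, is the technical heart of the argument, and rests on a delicate interplay between the arithmetic information encoded in $M_\lambda$ (via the Garsia-type separation of Theorem~\ref{th:Garsia-separation}) and the entropic geometry of $\nu_\lambda$ at dyadic scales.
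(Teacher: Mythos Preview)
Your outline captures the general shape of the argument---entropy surplus from Theorem~\ref{th:BreV2}, Garsia's $L\log L$ criterion via summability of $1-H(\nu_\lambda;2^{-d}|2^{-d+1})$, and an iterated convolution scheme---but it is missing the one genuinely new ingredient that makes the proof work.

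A Hochman-type inverse theorem (Theorem~\ref{th:low-entropy-convolution} in the paper) tells you that if $H(\mu;s|2s)<1-\alpha$ then convolving with a measure of nontrivial entropy produces a definite gain. The constant in that gain, however, depends on $\alpha$ and degenerates completely as $\alpha\to 0$. So this estimate alone can push $H(\nu_\lambda;r|2r)$ arbitrarily close to~$1$ (which recovers $\dim\nu_\lambda=1$), but it gives \emph{no rate} once you are already close, and hence no way to make $\sum_d\bigl(1-H(\nu_\lambda;2^{-d}|2^{-d+1})\bigr)$ converge. Your proposal never addresses what happens in the regime where the entropy defect is already small, and that is precisely where the whole difficulty lies.

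The paper's proof introduces a second convolution estimate for the high-entropy regime (Theorem~\ref{th:high-entropy-convolution}): if both factors satisfy $H(\cdot;s|2s)\ge 1-\alpha$ on a band of scales of width $\approx\log\alpha^{-1}$, then the convolution satisfies $H(\cdot;r|2r)\ge 1-C(\log\alpha^{-1})^3\alpha^2$. This near-squaring of the defect is what allows repeated convolution of pieces $\nu_\lambda^{I}$ to drive $1-H(\nu_\lambda;2^{-n}|2^{-n+1})$ down to $O(n^{-a(\lambda)})$ for some explicit $a(\lambda)$; the condition $\lambda>1-c\min\{\log M_\lambda,(\log M_\lambda)^{-1-\varepsilon}\}$ is exactly what forces $a(\lambda)>1$. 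Without this second estimate your iteration has no mechanism to accelerate, and the argument stalls at dimension~$1$.

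A minor point: on a bounded interval $L^p$ for $p>1$ is stronger than $L\log L$, not weaker, so your last sentence has the inclusion reversed.
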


The constant $c$ is effective, that is, it could be computed following the steps of the proof.
However, this has not been done.

Unlike Garsia's method in \cite{Gar1}, the proof of this result is robust enough that it applies to `biased' Bernoulli convolutions,
i.e. we can allow in the definition that the random variables $X_j$ take the values $\pm1$ with unequal probabilities.

Specializing the above result to rational numbers, we obtain the following.
\begin{corollary}[Varj\'u]
For every $\varepsilon>0$, there is $c>0$ such that the following holds.
Let $p$ and $q$ be positive integers such that
\begin{equation}\label{eq:p-condition}
p< \frac{c}{(\log q)^{1+\varepsilon}}q.
\end{equation}
Then the Bernoulli convolution $\nu_{1-p/q}$ is absolutely continuous.
\end{corollary}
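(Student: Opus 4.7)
The plan is to deduce the corollary directly from Theorem \ref{th:ac} applied to $\lambda=1-p/q$; since $\lambda$ is rational it is automatically algebraic, so only the size condition has to be verified. To compute the Mahler measure I would set $d=\gcd(p,q)$ and write $\lambda=(q_0-p_0)/q_0$ in lowest terms, where $p_0=p/d$ and $q_0=q/d$. The minimal polynomial of $\lambda$ over ${\bf Z}$ is then $q_0 x-(q_0-p_0)$, and since $\lambda$ has no Galois conjugates, Definition \ref{df:Mahler} gives $M_\lambda=q_0=q/\gcd(p,q)$.

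Next I would use the hypothesis to bound $M_\lambda$ from below. From $d\mid p$ we obtain $d\le p<cq/(\log q)^{1+\varepsilon}$, hence
\[
M_\lambda=q/d>\frac{(\log q)^{1+\varepsilon}}{c}.
\]
Once $q$ exceeds a threshold $Q_0=Q_0(c,\varepsilon)$, this forces $\log M_\lambda\ge 1$, and in that regime the minimum appearing in Theorem \ref{th:ac} equals $(\log M_\lambda)^{-1-\varepsilon}$. Combining this with the trivial upper bound $M_\lambda\le q$, I would conclude
\[
1-\lambda=\frac{p}{q}<\frac{c}{(\log q)^{1+\varepsilon}}\le c\,(\log M_\lambda)^{-1-\varepsilon}=c\cdot\min\{\log M_\lambda,(\log M_\lambda)^{-1-\varepsilon}\}.
\]
Choosing the corollary's constant no larger than the constant furnished by Theorem \ref{th:ac} for the given $\varepsilon$, the hypothesis of that theorem is satisfied, and it yields absolute continuity of $\nu_\lambda$.

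For the remaining range $q\le Q_0$ I would shrink $c$ further so that $cq/(\log q)^{1+\varepsilon}<1$ throughout this finite range; then the requirement $p\ge 1$ rules out every pair $(p,q)$ there, and the proof is complete. The only real subtlety is the bookkeeping of the two constants called $c$ and the case split $\log M_\lambda \gtrless 1$; all the analytic content sits in Theorem \ref{th:ac}, which this argument uses as a black box together with the elementary identity $M_{1-p/q}=q/\gcd(p,q)$.
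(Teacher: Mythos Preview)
Your argument is correct and matches the paper's approach: the paper's one-line hint that $|P(\lambda)|\ge q^{-d}$ for $P\in\mathcal{P}_d$ is precisely the bound $M_\lambda\le q$ that you feed into Theorem~\ref{th:ac}, and your handling of $\gcd(p,q)$ and small $q$ just fills in the routine details. One cosmetic slip: $\lambda$ does have itself as a Galois conjugate, but since $|\lambda|<1$ it does not enter the product in Definition~\ref{df:Mahler}, so your computation $M_\lambda=q/\gcd(p,q)$ stands.
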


The proof relies on the observation that $|P(p/q)|\ge q^{-d}$ for all $P\in\mathcal{P}_d$.
A stronger Diophantine input would allow a weaker hypothesis.
In particular, a positive answer to the following question would permit us to replace \eqref{eq:p-condition} by $p<cq$.

\begin{question}
Is it true that for all rational $p/q\in[9/10,19/20]$, there is a constant $c_{p,q}>0$ such that
\[
\#\{P\in\mathcal{P}_d:P(p/q)<c_{p,q}\exp(-Cd)\}<\exp(d/100)
\]
holds for some absolute constant $C$?
\end{question}

\section{Ideas from the proofs}\label{sc:proofs}

We present a mixture of ideas from the papers \cite{Hoc1}, \cite{Var} and \cite{BreV2}.

\subsection{Entropy}

Using entropy to study Bernoulli convolutions goes back to the work of Garsia \cite{Gar2}.
In his breakthrough \cite{Hoc1}, Hochman brought in ideas from additive combinatorics to estimate the
growth of entropy under convolution of measures.
In this section, we explain the variant of entropy that we work with and explain their basic properties.
In the next two sections, we give a brief account of the main ideas from the proofs of some of the results
we mentioned in Section \ref{sc:results}.

\begin{definition}
Let $X$ be a bounded random variable and let $r>0$ be a number.
The entropy of $X$ at scale $r$ is defined by the formula
\[
H(X;r)=\int_0^1H(\lfloor r^{-1}X+t\rfloor)dt,
\]
where $H(\cdot)$ denotes the Shannon entropy of a discrete random variable.
We also define the conditional entropy between two scales $r_1$ and $r_2$ by the formula
\[
H(X;r_1|r_2)=H(X;r_1)-H(X;r_2).
\]
If $\mu$ is the distribution of $X$, we write
$H(\mu;r)=H(X;r)$ and
$H(\mu;r_1|r_2)=H(X;r_1|r_2)$.
\end{definition}

In words, $H(X;r)$ is defined as follows.
We take a partition of the real line into intervals of length $r$, and compute the Shanon entropy of
$X$ with respect to this partition.
However, the choice of the partition is not unique, so we
take an average over all possible partitions, and define this to be the quantity $H(X;r)$.

The main purpose of this averaging procedure (which goes back to Wang \cite{Wan}) is technical convenience,
because it endows $H(\mu;r)$ with some useful properties that would otherwise fail.
In particular, we have
\[
0\le H(\mu;r_1|r_2)\le H(\mu;s_1|s_2),
\]
whenever $s_1\le r_1\le r_2\le s_2$.
That is to say, increasing the gap between the scales may only increase conditional entropy.
In addition,
\begin{equation}\label{eq:conv-non-decrease}
H(\mu*\nu;r_1|r_2)\ge H(\mu;r_1|r_2),
\end{equation}
whenever $r_2/r_1$ is an integer.
That is to say, convolution may only increase the conditional entropy between two scales of integral ratio.
Of course, these properties also hold without the averaging procedure at the expense of introducing an absolute constant
error term.
However, in certain arguments such an error could not be spared.
In others, this is not essential.
In particular \cite{Hoc1} does not use averaging.

\subsection{Dimension $1$}

In this section, we explain some ideas from the proofs of the results that we discussed in Section \ref{sc:res-dimension}.

The entropy dimension of a measure $\mu$ with bounded support in $\bf R$ is defined
\begin{equation}\label{eq:ent-dim}
\lim_{n\to\infty}\frac{1}{n} H(\mu;2^{-n}),
\end{equation}
if the limit exists.
If $\mu$ is exact dimensional (which is the case for Bernoulli convolutions \cite{FH}),
then the limit \eqref{eq:ent-dim} exists and is equal to $\dim \mu$.

Therefore, if $\dim\nu_\lambda<1$ for some parameter $\lambda$, then there is a number
$c>0$ such that $H(\nu_\lambda;2^{-n})<(1-c)n$ for all sufficiently large $n$.
Moreover, one can prove (see \cite{BreV2} for the details) that
\begin{equation}\label{eq:ent-digit}
H(\nu_\lambda;r|2r)<1-c'
\end{equation}
for all $r>0$, where $c'>0$ is another positive constant.

We introduce one more piece of notation that we use in our discussions.
For a bounded set $I\subset{\bf R}_{>0}$, we denote by $\nu_\lambda^I$ the distribution
of the random variable
\[
\sum_{j:\lambda^j\in I}X_j\lambda^j.
\]
The following observation is key to the arguments.
For any partition $I_1\dot\cup\ldots\dot\cup I_n=(0,1]$, we have
\[
\nu_\lambda=\nu_\lambda^{I_1}*\ldots*\nu_\lambda^{I_n}.
\]

The proofs of the results on dimension are indirect.
We assume that $\dim\nu_\lambda<1$, yet  the conclusion of the theorem in question fails.
(In case of Theorem \ref{th:Hochman-formula}, this conclusion is that $\dim \nu_\lambda=h_\lambda\log\lambda^{-1}$.)
The first step of the proof is to find some non-trivial lower bound on
$H(\nu_\lambda^I;r_1|r_2)$ for an appropriate set $I\subset {\bf R}_{>0}$ and for appropriate scales $r_1$ and $r_2$.
We formulate two such results, which are related to the settings of Theorems \ref{th:hochman} and \ref{th:Hochman-formula}
respectively.

\begin{lemma}\label{lm:dioph-trans}
Let $\lambda\in(1/2,1)$ and $A>0$ be numbers.
Suppose that $|\lambda-\xi|>\exp(-Ad)$ for all roots $\xi$ of polynomials in $\mathcal{P}_{d-1}$ for some integer $d$.
Then there is a positive integer $B$ depending only on $A$ and $\lambda$ such that
\[
H(\nu^{(\lambda^{d},1]}_\lambda;\lambda^{Bd})=d.
\]
\end{lemma}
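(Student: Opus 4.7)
The plan is to reduce the entropy equality to a pointwise separation statement for the atoms of $\nu_\lambda^{(\lambda^d,1]}$ and then extract that separation from the Diophantine hypothesis. The measure $\nu_\lambda^{(\lambda^d,1]}$ is the distribution of $\sum_{j=0}^{d-1} X_j \lambda^j$, hence supported on at most $2^d$ atoms, so $H(\nu_\lambda^{(\lambda^d,1]};r)\le d$ for every $r>0$. Equality at $r=\lambda^{Bd}$ is equivalent to two conditions: (i) the $2^d$ sign-sums are pairwise distinct, so that each atom carries mass $2^{-d}$; and (ii) they are pairwise at distance at least $\lambda^{Bd}$, so that for almost every translate $t$ in the definition of $H$ the floor map $\lfloor r^{-1}\cdot+t\rfloor$ is injective on the support. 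Condition (i) is immediate, since any collision would exhibit a nonzero $P\in\mathcal{P}_{d-1}$ with $P(\lambda)=0$, making $\lambda$ one of its roots and contradicting $|\lambda-\xi|>\exp(-Ad)>0$.

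The substantive step is (ii), which amounts to showing $|P(\lambda)|\ge\lambda^{Bd}/2$ for every nonzero $P\in\mathcal{P}_{d-1}$, with $B=B(A,\lambda)$ independent of $d$. Writing $P(z)=c\prod_{i=1}^{k}(z-\xi_i)$ with $c=\pm1$ and $k=\deg P\le d-1$, we have $|P(\lambda)|=\prod_i|\lambda-\xi_i|$, and each factor exceeds $\exp(-Ad)$ by hypothesis. The naive bound $|P(\lambda)|>\exp(-Ad)^{k}\ge\exp(-Ad^2)$ is too weak, since matching it against $\lambda^{Bd}$ would force $B\gtrsim Ad/\log\lambda^{-1}$ to grow linearly in $d$. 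To do better, I would fix a threshold $r_0=r_0(\lambda)>0$ and split the roots of $P$ into close ones (within $r_0$ of $\lambda$) and far ones; if $k'$ denotes the number of close roots, the far roots contribute at least $r_0$ each and the close ones at least $\exp(-Ad)$ each, giving
\[
|P(\lambda)|\ge r_0^{k-k'}\exp(-Adk')\ge r_0^{d}\exp\bigl(-(Ad+\log r_0^{-1})k'\bigr).
\]
If $k'\le C=C(A,\lambda)$ can be bounded independently of $d$, then $-\log|P(\lambda)|=O(d)$ with constants depending only on $A$, $\lambda$, and $r_0$, and taking $B$ of order $(AC+\log r_0^{-1})/\log\lambda^{-1}$ delivers the needed separation.

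The main obstacle is thus to bound, for any $P\in\mathcal{P}_{d-1}$, the number of roots lying inside a fixed $r_0$-neighbourhood of $\lambda$ by a constant depending only on $\lambda$. Heuristically this should hold because, for $\lambda\in(1/2,1)$, the ``close factor'' $P_1(z)=\prod_{\text{close}}(z-\xi_i)$ would be approximately $(z-\lambda)^{k'}$, whose coefficients $\binom{k'}{j}\lambda^{k'-j}$ peak at magnitude of order $(1+\lambda)^{k'}/\sqrt{k'}$, exponential in $k'$; a bound $k'\le C(\lambda)$ should then follow from a factor inequality applied to the integer polynomial $P=P_1P_2$ of sup norm at most $1$. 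The classical Mignotte bound $\|P_1\|_\infty\le 2^{k'}\|P\|_\infty$ is too lossy here, since $(1+\lambda)/2<1$, so the delicate point is to apply a sharper factor inequality, or to argue directly via Jensen's formula on the disk of radius $(1-\lambda)/2$ centred at $\lambda$, where $|P|$ is uniformly bounded by $2/(1-\lambda)$, in order to control the number of zeros clustering near $\lambda$. Once this clustering bound is established, the chain of estimates above yields a suitable $B(A,\lambda)$ and completes the proof.
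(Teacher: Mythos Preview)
Your approach is essentially the paper's: reduce the entropy identity to the separation bound $|P(\lambda)|\ge\lambda^{Bd}/2$ for every nonzero $P\in\mathcal{P}_{d-1}$, factorise $P$, split the roots into those near $\lambda$ and those far away, and show that the number of near roots is bounded by a constant $m=m(\lambda)$ independent of $d$. The one point that needs repair is your proposed use of Jensen's formula on a disk centred at $\lambda$: Jensen's inequality for the zero-count requires a lower bound on $|P|$ at the centre, and $|P(\lambda)|$ is precisely the quantity you are trying to bound from below, so this is circular.

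The paper resolves this by counting zeros in the disk $\{|z|<(1+\lambda)/2\}$ centred at the \emph{origin} instead. After dividing out the largest power of $z$ that divides $P$ one lands back in $\mathcal{P}_\infty$ with constant term $\pm1$, so $|P(0)|=1$ and Jensen (or, equivalently, a normal-families compactness argument on the family of such power series) gives a uniform bound $m(\lambda)$ on the zero count there. Any root with $|\xi|\ge(1+\lambda)/2$ satisfies $|\lambda-\xi|\ge(1-\lambda)/2$, and the near roots contribute at least $\exp(-Ad)$ each by hypothesis, yielding
\[
|P(\lambda)|\ge \exp(-Amd)\Big(\frac{1-\lambda}{2}\Big)^{d-1-m}\ge\lambda^{Bd}
\]
for a suitable $B=B(A,\lambda)$, exactly as you outlined.
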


\begin{lemma}\label{lm:dioph-alg}
Let $\lambda\in(1/2,1)$ be an algebraic number.
Then there is a positive integer $B$ such that
\[
H(\nu^{(\lambda^{d},1]}_\lambda;\lambda^{Bd})\ge d h_\lambda
\]
holds for all positive integers $d$.
\end{lemma}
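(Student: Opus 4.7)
The plan is to identify the measure explicitly as an atomic one, bound its Shannon entropy from below by $d h_\lambda$, use Garsia's separation (Theorem~\ref{th:Garsia-separation}) to show its atoms are well-spaced, and then verify that at a scale finer than this spacing the scale-$r$ entropy coincides with the Shannon entropy.

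First I would observe that the set of indices $j$ with $\lambda^j \in (\lambda^d,1]$ is exactly $\{0,1,\ldots,d-1\}$, so $\nu_\lambda^{(\lambda^d,1]}$ is precisely the atomic distribution $\nu_\lambda^{(d)}$ of $\sum_{j=0}^{d-1} X_j \lambda^j$. By the subadditivity of $n \mapsto H(\nu_\lambda^{(n)})$ noted earlier in the paper, $h_\lambda = \inf_n H(\nu_\lambda^{(n)})/n$, and in particular
\[
H(\nu_\lambda^{(d)}) \geq d\, h_\lambda.
\]

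Next I would lower-bound the minimum separation between distinct atoms of $\nu_\lambda^{(d)}$. The difference of two atoms $\sum \epsilon_j \lambda^j$ and $\sum \epsilon_j' \lambda^j$ (with $\epsilon_j,\epsilon_j' \in\{-1,1\}$) equals $2P(\lambda)$ for a polynomial $P \in \mathcal{P}_{d-1}$ with coefficients in $\{-1,0,1\}$. The two atoms coincide precisely when $P(\lambda)=0$; otherwise Theorem~\ref{th:Garsia-separation} gives
\[
|P(\lambda)| \geq c(d-1)^{-m} M_\lambda^{-(d-1)},
\]
where $m$ is the number of Galois conjugates of $\lambda$ on the unit circle and $c$ depends only on $\lambda$. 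Hence distinct atoms are separated by at least $\delta_d := c_1 d^{-m} M_\lambda^{-d}$ for some $c_1=c_1(\lambda)>0$. I would then fix any integer $B$ with $B \log \lambda^{-1} > \log M_\lambda$; since $\lambda$ and $M_\lambda$ are fixed, the ratio $\lambda^{Bd}/\delta_d = c_1^{-1} d^m (\lambda^B M_\lambda)^d$ is bounded by a decreasing exponential times a polynomial, so by enlarging $B$ further (a finite amount, depending only on $\lambda$) we may arrange $\lambda^{Bd} \leq \delta_d$ for every $d \geq 1$.

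Finally I would verify that the scale-$\lambda^{Bd}$ entropy of $\nu_\lambda^{(d)}$ equals its Shannon entropy. If the distinct atoms of a discrete random variable $X$ are pairwise separated by at least $r$, then for any two atoms $x<x'$ one has $r^{-1}(x'-x) \geq 1$, so $\lfloor r^{-1}x + t\rfloor < \lfloor r^{-1}x' + t\rfloor$ for every $t \in [0,1]$. Hence the map $x \mapsto \lfloor r^{-1}x+t\rfloor$ is injective on $\operatorname{supp}(X)$ for every $t$, giving $H(\lfloor r^{-1}X + t\rfloor) = H(X)$ and, upon integration, $H(X;r) = H(X)$. Specialising to $X \sim \nu_\lambda^{(d)}$ and $r=\lambda^{Bd} \leq \delta_d$ yields
\[
H(\nu_\lambda^{(\lambda^d,1]};\lambda^{Bd}) = H(\nu_\lambda^{(d)}) \geq d\, h_\lambda,
\]
as desired. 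The only real technical issue is the quantitative choice of $B$ and the bookkeeping needed to absorb both the polynomial factor $d^{-m}$ and the requirement that the bound hold for all $d \geq 1$ (not just for sufficiently large $d$); this is routine since the exponential gain from $\lambda^B M_\lambda < 1$ beats any polynomial correction once $B$ is taken large enough, with finitely many small values of $d$ handled by enlarging $B$ one more step.
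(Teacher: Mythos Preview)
Your proof is correct and follows essentially the same approach as the paper's sketch: identify $\nu_\lambda^{(\lambda^d,1]}$ with $\nu_\lambda^{(d)}$, invoke Garsia's separation bound (Theorem~\ref{th:Garsia-separation}) to ensure that at scale $\lambda^{Bd}$ the floor map is injective on the atoms, and then use subadditivity to get $H(\nu_\lambda^{(d)})\ge d h_\lambda$. Your version is more detailed than the paper's sketch in justifying that a single $B$ works uniformly for all $d\ge 1$, but the underlying argument is the same.
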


\begin{proof}[Sketch proof of Lemma \ref{lm:dioph-trans}]
The proof begins with the observation that the assumption $|\lambda-\xi|>\exp(-Ad)$
for all roots $\xi$ of a polynomial $P\in\mathcal{P}_{d-1}$ implies that $|P(\lambda)|>\lambda^{Bd}$
for some number $B$; a fact closely related to the idea of `transversality' discussed in Section \ref{sc:transversality}.

To see this, we first note that there is a number $m$ depending only on $\lambda$ such that 
each polynomial $P\in \mathcal{P}_{d-1}$ has at most $m$ roots of modulus less than $(1+\lambda)/2$.
It is key that $m$ is independent of $d$.
Such a bound can be deduced from Jensen's formula.
Alternatively, one can argue by contradiction and show that a putative sequence of polynomials with unbounded
number of roots would converge along a subsequence to an analytic function with infinitely many
zeros in a compact subset of its domain.

We factorize $P$ and separate the contribution of the zeros of modulus less than $(1+\lambda)/2$.
We obtain
\[
|P(\lambda)|=\prod_{\xi:P(\xi)=0}|\lambda-\xi|\ge \exp({-Amd})\Big(\frac{1-\lambda}{2}\Big)^{d-1-m}\ge \lambda^{Bd}
\]
provided $B$ is large enough so that $\lambda^B<\exp(-Am)(1-\lambda)/2$.

We note that the difference between any two numbers of the form $\sum_{j=0}^{d-1}\pm\lambda^j$ is
$2P(\lambda)$ for some $P\in\mathcal{P}_{d-1}$.
Hence the random variable
\[
\Big\lfloor \lambda^{-Bd} \sum_{j=0}^{d-1}X_j\lambda^j +t\Big\rfloor
\]
takes $2^d$ different values with equal probability for any $t$.
This proves the claim.
\end{proof}

\begin{proof}[Sketch proof of Lemma \ref{lm:dioph-alg}]
This lemma relies on Garsia's estimate (Theorem \ref{th:Garsia-separation})
on the separation between two distinct numbers of the form $\sum_{j=0}^{d-1}\pm\lambda^j$.

If we set $B$ sufficiently large, that estimate implies that the random variable
\[
\Big\lfloor \lambda^{-Bd} \sum_{j=0}^{d-1}X_j\lambda^j +t\Big\rfloor
\]
has the same entropy as  $\sum_{j=0}^{d-1}X_j\lambda^j$. 
Thus
\[
H\Big(\nu_\lambda^{(\lambda^{d},1]};\lambda^{Bd}\Big)\ge H\Big(\sum_{j=0}^{d-1}X_j\lambda^j\Big)\ge d h_\lambda,
\]
and the claim follows.
\end{proof}

Recall our standing assumption $\dim\nu_\lambda<1$.
Via \eqref{eq:ent-digit} this implies
\[
H(\nu_\lambda^{(\lambda^d,1]};\lambda^d)<(1-c')\cdot\log(\lambda^{-d})+C
\]
for a $C$ depending on $\lambda$ (more precisely on the diameter of $\supp\nu_\lambda$).
We combine this with the conclusion of Lemma \ref{lm:dioph-trans} and find that there is
$\beta>0$ such that
\begin{equation}\label{eq:first-step}
H(\nu_\lambda^{(\lambda^d,1]};\lambda^{Bd}|\lambda^d)\ge \beta\cdot \log(\lambda^{-d(B-1)})
\end{equation}
provided the hypotheses of the lemma is satisfied.
In the setting of Lemma \ref{lm:dioph-alg} we can obtain the same conclusion provided $h_\lambda\ge \log\lambda^{-1}$.

Using scaling properties of entropy, we note that \eqref{eq:first-step} also yields
\[
H(\nu_\lambda^{(\lambda^{d+a},\lambda^a]};\lambda^{Bd+a}|\lambda^{d+a})\ge\beta\cdot\log(\lambda^{-d(B-1)})
\] 
for any integer $a\ge 0$.

In the next step of the proofs, we exploit the idea that convolution increases entropy to improve on the
bound \eqref{eq:first-step}.
We noted above that convolution may not decrease entropy (at least not between scales of integral ratio), however,
we need now a stronger result, which says that we can obtain a definite entropy increase.
We recall the following result from \cite{Var}, which is a quantitative strengthening of Hochman's original estimate
\cite{Hoc1}.

\begin{theorem}\label{th:low-entropy-convolution}
For every $\alpha>0$, there are $C,c>0$ such that the following holds.
Let $\mu,\nu$ be two compactly supported probability measures on $\bf R$.
Let  $s_1>s_2>0$ and $\beta>0$ be real numbers.
Suppose that 
\begin{equation}\label{eq:alpha}
H(\mu;s|2s)<1-\alpha
\end{equation}
for all $s_2<s<s_1$.
Suppose further that
\[
H(\nu; s_2|s_1)>\beta\cdot(\log s_1-\log s_2).
\]

Then
\[
H(\mu*\nu; s_2|s_1)> H(\mu;s_2|s_1)+c\beta\cdot(\log\beta^{-1})^{-1}(\log s_1-\log s_2)-C.
\]
\end{theorem}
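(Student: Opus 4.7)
The plan is to adapt Hochman's entropy increase argument from \cite{Hoc1} and make it quantitative, as in \cite{Var}. The key tool is an inverse theorem for entropy under convolution: if a convolution $\mu * \nu$ fails to increase entropy significantly over $\mu$, then at most dyadic scales between $s_2$ and $s_1$ one of the measures must be structured. Specifically, at each such scale either the component measures of $\mu$ look locally Lebesgue-like (approximately uniform on an interval) or those of $\nu$ look locally atomic. The hypothesis \eqref{eq:alpha} gives $\mu$ a definite entropy deficit at every scale in $(s_2,s_1)$, which rules out the first option, so $\nu$ would have to be nearly atomic at most scales, contradicting the lower bound $H(\nu;s_2|s_1) > \beta N$.

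I would argue by contradiction. Let $N = \log s_1 - \log s_2$, so $[s_2,s_1]$ spans roughly $N$ dyadic scales. Suppose the entropy gain $H(\mu*\nu;s_2|s_1) - H(\mu;s_2|s_1)$ violates the claimed lower bound. Apply the quantitative single-scale inverse theorem with parameter $\varepsilon \sim \beta/\log(1/\beta)$: it produces, at each dyadic scale, an entropy gain proportional to $\varepsilon$ provided that neither $\mu$'s components are $\varepsilon$-close to uniform nor $\nu$'s components are $\varepsilon$-close to atomic. By \eqref{eq:alpha}, $\mu$ is never $\varepsilon$-close to uniform (for $\varepsilon < \alpha/2$, say). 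Hence the hypothetical failure forces $\nu$ to be $\varepsilon$-close to atomic at all but $O(\varepsilon N)$ scales, which in turn forces $H(\nu;s_2|s_1) = O(\varepsilon N) = o(\beta N)$ and contradicts the assumption once the implicit constants are tuned.

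Following \cite{Var}, the bound is then built by summing scale-by-scale contributions: at each dyadic scale $s \in (s_2,s_1)$ where the $\nu$-components carry entropy of order $\beta$, the entropy deficit of $\mu$ combined with the inverse theorem yields a definite gain of order $\beta/\log\beta^{-1}$ for $\mu*\nu$ at that scale. Because convolution never decreases conditional entropy between scales of integer ratio (by \eqref{eq:conv-non-decrease}), these gains accumulate additively. The factor $(\log\beta^{-1})^{-1}$ is the effective rate of the inverse theorem; the additive constant $C$ absorbs boundary effects at the endpoints $s_1, s_2$ together with the error introduced by the averaging built into the definition of $H(\cdot;r)$.

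The principal obstacle is the quantitative inverse theorem itself, which must replace Hochman's asymptotic statement by an effective one with controlled dependence on $\beta$. Its proof in \cite{Var} refines Hochman's multi-scale analysis using entropy analogues of Plünnecke--Ruzsa type inequalities, a dyadic decomposition of both measures into pieces of approximately constant component entropy, and a careful matching of scales so that structured components of $\mu$ and $\nu$ are confronted with each other. The factor $(\log\beta^{-1})^{-1}$ seems intrinsic to this kind of argument and reflects the logarithmic losses when passing from an existence statement about structured components to a uniform quantitative bound; dispensing with it would likely require a genuinely different approach.
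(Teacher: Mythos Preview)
The paper does not actually prove this theorem. Immediately after the statement it says: ``The proof is beyond the scope of this note,'' and limits itself to a one-paragraph commentary describing the result as an entropy analogue of the additive part of Bourgain's discretized sum--product theorem, noting that the proofs in \cite{Hoc1}, \cite{LV}, and \cite{Var} are all multiscale, with Hochman using Berry--Esseen at multiple scales while \cite{LV} and especially \cite{Var} are ``closer to the ideas of Bourgain.'' So there is no proof in the paper to compare your sketch against.

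That said, your outline is broadly compatible with what the paper signals, and the contradiction strategy via an inverse theorem is a legitimate route. Two remarks are worth making. First, the paper explicitly points toward Bourgain's discretized sum--product methods for the proof in \cite{Var}, whereas your sketch is framed almost entirely in the language of Hochman's inverse theorem (component measures looking uniform versus atomic); these are related but not identical frameworks, and the paper seems to distinguish them deliberately. Second, your sketch is honest about where the real work lies---the quantitative inverse theorem with the $(\log\beta^{-1})^{-1}$ dependence---but it does not actually carry that out; what you have written is a plausible strategy summary rather than a proof. In particular, the claim that ``the factor $(\log\beta^{-1})^{-1}$ is the effective rate of the inverse theorem'' and the invocation of ``entropy analogues of Pl\"unnecke--Ruzsa type inequalities'' are assertions about what happens in \cite{Var}, not arguments you have given. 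Since the paper itself defers the proof entirely to \cite{Var}, your sketch is in the same spirit, only more detailed; it is neither wrong nor a proof.
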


This result can be thought of as an entropy analogue of the additive part in the proof
of Bourgain's discretized sum product theorem \cite{Bou1}, \cite{Bou2}.
Another variant of Theorem \ref{th:low-entropy-convolution} can be found in \cite{LV}.
The proof is beyond the scope of this note.
Both Hochman's original proof \cite{Hoc1} and the proofs in \cite{LV} and \cite{Var} are based on a multiscale argument.
Hochman used the Berry-Esseen inequalities at multiple scales, while \cite{LV} and especially \cite{Var} are
closer to the ideas of Bourgain.

Observe that this formulation of the theorem is well-adapted for the application of proving dimension $1$
for Bernoulli convolutions.
Indeed, equations \eqref{eq:ent-digit} and \eqref{eq:conv-non-decrease} imply that condition \eqref{eq:alpha}
always holds for $\nu_\lambda^I$ with some constant $\alpha$ independent of $I\subset (0,1]$
provided $\dim\nu_\lambda<1$.

After these preparations, Theorems \ref{th:hochman} and \ref{th:Hochman-formula} are reduced to the following.

\begin{theorem}[Hochman]\label{th:hochman3}
Let $\lambda\in(1/2,1)$ be a number.
Suppose that there are $B\in{\bf Z}_{>0}$ and $\beta>0$ such that
there are infinitely many integers $d$ that satisfy
\begin{equation}\label{eq:hypothesis}
H(\nu_\lambda^{(\lambda^d,1]};\lambda^{Bd}|\lambda^d)>\beta\cdot \log(\lambda^{-d(B-1)}).
\end{equation}
Then $\dim\nu_\lambda=1$.
\end{theorem}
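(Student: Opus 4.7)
The plan is to argue by contradiction: assume $\dim\nu_\lambda<1$, and derive a violation of the entropy upper bound that this assumption forces.

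First, I would set up the low-entropy hypothesis \eqref{eq:alpha} of Theorem \ref{th:low-entropy-convolution}. Since $\dim\nu_\lambda<1$, inequality \eqref{eq:ent-digit} supplies a constant $c'>0$ with $H(\nu_\lambda;r|2r)<1-c'$ for every $r>0$. Since any partial convolution $\nu_\lambda^I$ appears as a convolution factor of $\nu_\lambda$, the monotonicity property \eqref{eq:conv-non-decrease} transfers this bound to every such $\nu_\lambda^I$. Hence Theorem \ref{th:low-entropy-convolution} is available with $\alpha=c'$ whenever the $\mu$-slot is a partial convolution.

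Next I would transport the hypothesis to a family of scale windows via the scaling symmetry of $\nu_\lambda$: for every integer $a\ge 0$,
\[
H\bigl(\nu_\lambda^{(\lambda^{d+a},\lambda^a]};\lambda^{Bd+a}\,\big|\,\lambda^{d+a}\bigr)\geq\beta\cdot(B-1)d\log\lambda^{-1}.
\]
Fix a large integer $n$ and take shifts $a_k=k(B-1)d$ for $k=0,\ldots,n-1$. The intervals $I_k=(\lambda^{d+a_k},\lambda^{a_k}]$ are pairwise disjoint, and the associated scale windows $[\lambda^{Bd+a_k},\lambda^{d+a_k}]$ tile $[\lambda^{(n(B-1)+1)d},\lambda^d]$ contiguously, with any leftover intervals at the top or between the $I_k$'s absorbed as dummy factors of non-negative entropy. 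Writing $\nu_\lambda$ as this convolution, I would apply Theorem \ref{th:low-entropy-convolution} separately in each of the $n$ windows: take $\nu$ to be the factor $\nu_\lambda^{I_k}$, whose entropy is controlled by the scaled hypothesis, and $\mu$ to be the convolution of the remaining factors, whose low-entropy condition is supplied by the first step. Each application yields
\[
H(\nu_\lambda;\lambda^{Bd+a_k}\,\big|\,\lambda^{d+a_k})\geq c\,\beta(\log\beta^{-1})^{-1}(B-1)d\log\lambda^{-1}-C.
\]

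Summing these via the chain rule for conditional entropy produces a lower bound on $H(\nu_\lambda;\lambda^{(n(B-1)+1)d}|\lambda^d)$ that grows linearly in $n$, with per-window coefficient $c\beta(\log\beta^{-1})^{-1}(B-1)d\log\lambda^{-1}-C$. Choosing $d$ large enough along the hypothesis-satisfying sequence makes the additive $C$ negligible against the linear-in-$d$ main term. I then compare with the upper bound $(1-c')n(B-1)d\log\lambda^{-1}$ coming from \eqref{eq:ent-digit} applied level by level, and let $n\to\infty$. The main obstacle is the quantitative one of ensuring that the accumulated gain actually outweighs the $(1-c')$-deficit. Depending on the relative sizes of the constants, closing this gap requires a bootstrap in which the strengthened entropy lower bound is fed back into \eqref{eq:hypothesis} at finer scales, driving the effective $\beta$ upward through successive rounds toward the trivial upper bound $1$, which is incompatible with $c'>0$. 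Controlling this bootstrap, in particular tracking the dependence of the constant $c$ in Theorem \ref{th:low-entropy-convolution} on $\alpha=c'$ so that each round still supplies a definite positive gain, is the technical heart of the argument and the reason the proof rests essentially on the multiscale inverse theorem of \cite{Hoc1,Var} rather than on a softer convolution inequality.
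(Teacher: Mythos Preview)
Your setup in the first two paragraphs is fine: the low-entropy condition \eqref{eq:alpha} does indeed transfer to every partial convolution $\nu_\lambda^I$ via \eqref{eq:conv-non-decrease}, and the scaling of \eqref{eq:hypothesis} is correct. The problem is in what follows.

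When you apply Theorem \ref{th:low-entropy-convolution} in window $k$ and record only
\[
H(\nu_\lambda;\lambda^{Bd+a_k}|\lambda^{d+a_k})\ge c\beta(\log\beta^{-1})^{-1}(B-1)d\log\lambda^{-1}-C,
\]
you have discarded the term $H(\mu;\lambda^{Bd+a_k}|\lambda^{d+a_k})$ from the conclusion of the theorem. Summing your inequality over $k$ gives a lower bound with per-window density $c\beta(\log\beta^{-1})^{-1}$, while the upper bound from \eqref{eq:ent-digit} has per-window density $1-c'$. For small $\beta$ (which is all the hypothesis provides) these are nowhere near comparable, as you recognise. The bootstrap you sketch to close the gap does not work: the quantity you have improved is $H(\nu_\lambda;\cdot\,|\,\cdot)$ for the \emph{full} measure, whereas feeding anything back into the $\nu$-slot of Theorem \ref{th:low-entropy-convolution} requires a lower bound on a \emph{proper convolution factor} $\nu_\lambda^I$. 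Since \eqref{eq:conv-non-decrease} only lets you pass bounds from factors to the full measure and not the other way, there is no mechanism to raise the effective $\beta$.

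The missing idea, and the one the paper uses, is to retain the $H(\mu;\cdot\,|\,\cdot)$ term and bound it from below using the \emph{existence of the limit}
\[
\gamma=\lim_{d\to\infty}\frac{1}{d}H(\nu_\lambda;\lambda^d)=\lim_{d\to\infty}\frac{1}{d}H(\nu_\lambda^{(\lambda^d,1]};\lambda^d|1).
\]
Fix $\varepsilon>0$ and take $d$ large satisfying \eqref{eq:hypothesis}. By scaling, $H(\nu_\lambda^{(\lambda^{Bd},\lambda^d]};\lambda^{Bd}|\lambda^d)\ge(\gamma-\varepsilon)(B-1)d$. A \emph{single} application of Theorem \ref{th:low-entropy-convolution} with $\mu=\nu_\lambda^{(\lambda^{Bd},\lambda^d]}$ and $\nu=\nu_\lambda^{(\lambda^d,1]}$ then gives
\[
H(\nu_\lambda^{(\lambda^{Bd},1]};\lambda^{Bd}|\lambda^d)>(\gamma-\varepsilon+c\beta(\log\beta^{-1})^{-1}\log\lambda^{-1})(B-1)d-C.
\]
Adding the top segment $H(\nu_\lambda^{(\lambda^{Bd},1]};\lambda^d|1)\ge(\gamma-\varepsilon)d-C$ produces a lower bound for $H(\nu_\lambda^{(\lambda^{Bd},1]};\lambda^{Bd}|1)$ strictly exceeding $(\gamma+\varepsilon)Bd$ once $\varepsilon$ is chosen small relative to $c\beta(\log\beta^{-1})^{-1}\log\lambda^{-1}$. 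This contradicts the definition of $\gamma$ directly; no tiling, no iteration, and no bootstrap are needed. The point is that the limit pins $H(\mu;\cdot)$ both above and below to within $\varepsilon$, so \emph{any} positive gain from the convolution theorem, however small, already yields the contradiction.
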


\begin{proof}[Sketch proof]
Suppose to the contrary that \eqref{eq:hypothesis} holds, yet $\dim\nu_\lambda<1$.
We fix a small number $\varepsilon>0$ that we will specify later.
Since the limit
\[
\gamma=\lim_{d\to\infty}\frac{1}{d}H(\nu_\lambda;\lambda^d)=\lim_{d\to\infty}\frac{1}{d} H(\nu_\lambda^{(\lambda^d,1]};\lambda^d|1)
\]
exists, 
there is a number $D$ such that
\begin{equation}\label{eq:large-d}
\Big|\frac{1}{d} H(\nu_\lambda^{(\lambda^d,1]};\lambda^d|1)-\gamma\Big|<\varepsilon
\end{equation}
for all $d>D$.
We have $\gamma=\dim\nu_\lambda\log\lambda^{-1}$, but we do not need to know this.

Let $d>D$ be a number such that \eqref{eq:hypothesis} holds for this $d$.
We apply \eqref{eq:large-d} with $(B-1)d$ in place of $d$ and scale it by a factor of $\lambda^d$:
\begin{equation}\label{eq:H3}
H(\nu_\lambda^{(\lambda^{Bd},\lambda^d]};\lambda^{Bd}|\lambda^d)\ge (\gamma-\varepsilon)\cdot (B-1)d.
\end{equation}
We apply Theorem \ref{th:low-entropy-convolution} for the measures $\mu=\nu_\lambda^{(\lambda^{Bd},\lambda^d]}$
and $\nu=\nu_\lambda^{(\lambda^d,1]}$ between the scales $s_2=\lambda^{Bd}$ and $s_1=\lambda^d$.
Using \eqref{eq:hypothesis} and \eqref{eq:H3}, we obtain
\[
H(\nu_\lambda^{(\lambda^{Bd},1]};\lambda^{Bd}|\lambda^d)> (\gamma-\varepsilon
+c\beta(\log \beta^{-1})^{-1}\log \lambda^{-1})\cdot (B-1)d-C.
\]

We use \eqref{eq:large-d} again and write
\[
H(\nu_\lambda^{(\lambda^{Bd},1]};\lambda^{d}|1)\ge
H(\nu_\lambda^{(\lambda^{d},1]};\lambda^{d}|1)-C\ge
(\gamma-\varepsilon)\cdot d-C.
\]
We combine this with our previous estimate and find
\[
H(\nu_\lambda^{(\lambda^{Bd},1]};\lambda^{Bd}|1)>(\gamma-\varepsilon
+c\beta(\log \beta^{-1})^{-1}\log (\lambda^{-1})(B-1)/B)\cdot Bd-C.
\]
We take
\[
2\varepsilon<c\beta(\log \beta^{-1})^{-1}\log (\lambda^{-1})(B-1)/B,
\]
which leads to a contradiction with \eqref{eq:large-d}.
\end{proof}

We briefly comment on the proof of Theorem \ref{th:BreV1}.
The argument again begins with considerations about the separation between the points
in the support of the measure $\nu_\lambda^{(\lambda^d,1]}$.
Consider the set of polynomials in $\mathcal{P}_{d-1}$ that take very small values (less than $d^{-Cd}$)
at $\lambda$.
Using Diophantine considerations related to Theorem \ref{th:Mahler}, one can show that these
polynomials have a common root $\xi$ such that $|\xi-\lambda|<d^{-4d}/2$.
Hence we obtain an upper bound on $H(\nu_\xi^{(d)})$ in terms of $H(\nu_\lambda^{(\lambda^d,1]};d^{-Cd})$.

We can deduce from this argument that either $h_\xi< \log\xi^{-1}$ or
\begin{equation}\label{eq:weak-separation}
H(\nu_\lambda^{(\lambda^d,1]},d^{-Cd};\lambda^d)\ge cd=\frac{c}{C\log d+\log\lambda}\cdot\log(d^{Cd}\lambda^d)
\end{equation}
holds for suitable numbers $c$ and $C$ that may depend on $\lambda$, but which are independent of $d$.

If there is no $\xi$ that is a root of a polynomial in $\mathcal{P}_{d-1}$, such that $|\xi-\lambda|<d^{-4d}/2$ and $h_\xi<\log\xi^{-1}$,
then \eqref{eq:weak-separation} holds for $d$.
If there is such a $\xi$,
then we set $d'$ to be the smallest integer such that $|\xi-\lambda|\ge {d'}^{-4{d'}}/2$.
By Theorem \ref{th:Mahler}, any root $\xi'$ of a polynomial in $\mathcal{P}_{d'-1}$
with $|\xi-\xi'|<{(d'-1)}^{-4(d'-1)}$ satisfies $\xi=\xi'$.
Therefore, there is no root $\xi'$ of a polynomial in $\mathcal{P}_{d'-1}$ that satisfy $|\xi'-\lambda|<{d'}^{-4{d'}}/2$, and
by the previous argument, \eqref{eq:weak-separation} holds with $d'$ in place of $d$.

This argument provides a sequence of integers $d$ such that \eqref{eq:weak-separation} holds.
We can control the gaps in this sequence in terms of the distances of $\lambda$ from algebraic numbers $\xi$
that are roots of  polynomials in $\mathcal{P}_d$ and satisfy $h_\xi<\log\lambda^{-1}$.

In the next step, we apply Theorem \ref{th:low-entropy-convolution} to improve on \eqref{eq:weak-separation}.
However, this time, the parameter $\beta$ that we can use in Theorem \ref{th:low-entropy-convolution}
tends to $0$, as $d$ grows.
This has a number of consequences.

First, the quantitative aspects of Theorem \ref{th:low-entropy-convolution} become important.
(The proof of Theorem \ref{th:hochman3} was not sensitive
to the amount of entropy gained in Theorem \ref{th:low-entropy-convolution}, as long as it was
a positive constant times the logarithm of the ratio of the scales.)

Second, we have to apply Theorem \ref{th:low-entropy-convolution} repeatedly sufficiently many
times until the combined contribution of the convolution factors to the entropy gain becomes significant.
These calculations are delicate.
To illustrate this, we note that the success of the argument ultimately relies on the fact that the series
$\sum (n\log n\log\log n)^{-1}$ diverges. 

For the appropriate decomposition of $\nu_\lambda$ as a convolution and for the details of the
calculation, we refer the reader to the paper \cite{BreV2}.

\subsection{Absolute continuity}

The argument discussed in the previous section can be adapted to show that under the relevant hypothesis
in the setting of Theorems \ref{th:hochman}, \ref{th:BreV1} or \ref{th:Hochman-formula}
we have
\begin{equation}\label{eq:convergence}
\lim_{d\to \infty} H(\nu_\lambda;2^{-d}|2^{-d+1})=1.
\end{equation}
We have seen that this implies $\dim\nu_\lambda=1$.

In order that we can conclude that $\nu_\lambda$ is absolutely continuous, we need to find a rate for the speed of
convergence in \eqref{eq:convergence}.
Indeed, Garsia \cite{Gar2} observed that $\nu_\lambda$ is absolutely continuous if the sequence
$d-H(\nu_\lambda;2^{-d})$
is bounded.
Moreover, this condition implies that the density of $\nu_\lambda$ belongs to the Orlicz space $L\log L$.
Therefore, it is enough for us to show that the series
\[
\sum (1-H(\nu_\lambda;2^{-d}|2^{-d+1}))
\]
is convergent.

The proof of Theorem \ref{th:ac} follows a similar strategy as discussed in the previous section, but now we cannot
rely on Theorem \ref{th:low-entropy-convolution} alone, because that provides no control on the entropy gain
if the parameter $\alpha$ approaches $0$.
For this reason, we need the following estimate.

\begin{theorem}\label{th:high-entropy-convolution}
There is an absolute constant $C>0$ such that the following holds.
Let $\mu,\widetilde\mu$ be two compactly supported probability measures on $\bf R$ and let $\alpha,r>0$ be real numbers.
Suppose that
\[
H(\mu;s|2s)\ge 1-\alpha\quad \text{and}\quad H(\widetilde\mu;s|2s)\ge 1-\alpha
\]
for all $s$ with $|\log r -\log s|<C\log \alpha^{-1}$.

Then
\[
H(\mu*\widetilde\mu;r|2r)\ge 1- C(\log \alpha^{-1})^3 \alpha^2.
\]
\end{theorem}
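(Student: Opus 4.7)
The plan is to translate the entropy hypothesis into an $L^2$ closeness-to-uniform statement on the Fourier side, exploit the multiplicative behavior of such defects under convolution, and then convert back to entropy via a reverse Pinsker inequality. For a probability measure $\nu$ and scale $s$, write $U_s$ for the uniform probability density on $[-s/2,s/2]$, so that $\nu * U_s$ is the density of $\nu$ smoothed at scale $s$. The conditional entropy $H(\nu;s|2s)$ essentially measures how different $\nu*U_s$ is from $\nu*U_{2s}$, and the hypothesis $H(\nu;s|2s)\ge 1-\alpha$ should be viewed as saying that, at scale $s$, the measure is near-uniform with a quantified defect $\alpha$.

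The first step is a quantitative dictionary at a single scale. In the high-entropy regime, a reverse Pinsker estimate converts $H(\nu;s|2s)\ge 1-\alpha$ into a Fourier-side bound of the form
\[
\int |\hat\nu(\xi)|^2\, \bigl|\hat U_s(\xi)-\hat U_{2s}(\xi)\bigr|^2\, d\xi \lesssim \alpha\log(\alpha^{-1}),
\]
where the kernel on the right is concentrated near $|\xi|\sim 1/s$. Applying the hypothesis across the logarithmic window of scales $s$ with $|\log r-\log s|<C\log\alpha^{-1}$ and averaging, a Chebyshev-type argument extracts an essentially pointwise bound $|\hat\nu(\xi)|^2\lesssim \alpha\,(\log\alpha^{-1})^{O(1)}$ on a density-one subset of the frequency band $|\xi|\sim 1/r$; the width $C\log\alpha^{-1}$ of the scale window is calibrated precisely to make this averaging efficient.

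For the convolution, $\widehat{\mu*\widetilde\mu}(\xi) = \hat\mu(\xi)\hat{\widetilde\mu}(\xi)$, so on the density-one subset above one gets $|\widehat{\mu*\widetilde\mu}(\xi)|^2\lesssim \alpha^2(\log\alpha^{-1})^{O(1)}$ pointwise; on the exceptional set the trivial bound $|\widehat{\mu*\widetilde\mu}|\le 1$ combined with the smallness of the exceptional set suffices. Integrating against the scale-$r$ kernel then yields
\[
\int |\widehat{\mu*\widetilde\mu}(\xi)|^2\, \bigl|\hat U_r(\xi)-\hat U_{2r}(\xi)\bigr|^2\, d\xi \lesssim \alpha^2(\log\alpha^{-1})^2,
\]
and converting back to entropy loses one more logarithmic factor, producing the claimed bound $H(\mu*\widetilde\mu;r|2r)\ge 1-C(\log\alpha^{-1})^3\alpha^2$.

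The main obstacle will be the quantitative reverse Pinsker step in the high-entropy regime: one must convert entropy deficits to $L^2$ deficits (and vice versa) with losses of at most a single logarithmic factor at each conversion, which is delicate because the density in question is merely near-constant rather than bounded. A related difficulty is extracting essentially pointwise Fourier control on $|\hat\nu|$ from the integrated scale-wise hypothesis; the averaging window $C\log\alpha^{-1}$ must be wide enough for the Fourier kernels at distinct scales to together cover the relevant frequency annulus with overlap, yet not so wide that the low-frequency contribution (where $\hat\nu$ is close to $1$) or the high-frequency tail (where Riemann--Lebesgue decay takes over) dominates. These two sources of logarithmic loss, together with the loss in the final conversion back to entropy, combine to produce precisely the $(\log\alpha^{-1})^3$ factor appearing in the conclusion.
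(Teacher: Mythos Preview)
The survey itself does not prove this theorem; it is stated and attributed to \cite{Var}. The argument there does not pass through Fourier analysis: it works directly with Shannon entropy, obtaining the $\alpha\mapsto\alpha^2$ squaring from inequalities for the entropy of sums of independent random variables combined with the multi-scale hypothesis, with no $L^2$ or spectral intermediary.

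Your Fourier plan has the right heuristic --- Fourier transforms multiply under convolution, so defects should square --- but there is a genuine gap at precisely the conversion steps you flag as delicate, and it is structural rather than merely a matter of tracking logarithms. Write $p_I,q_J$ for the $\nu$-masses of an $s$-interval $I$ and the $2s$-interval $J\supset I$. Then, up to harmless constants and an overall scale normalisation,
\[
1-H(\nu;s\mid 2s)\ \asymp\ \sum_J q_J\Bigl(\frac{p_I}{q_J}-\tfrac12\Bigr)^2,
\qquad
\bigl\|\nu*U_s-\nu*U_{2s}\bigr\|_2^2\ \asymp\ \sum_J q_J^{\,2}\Bigl(\frac{p_I}{q_J}-\tfrac12\Bigr)^2,
\]
so the Plancherel quantity and the entropy defect differ by a weight $q_J$. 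They are comparable only once one already knows that the coarse masses $q_J$ are themselves nearly uniform; this is not a reverse Pinsker fact at a single scale, it is exactly the place where the window hypothesis must be exploited, and you have not supplied that argument. The final conversion has the same obstruction with the inequality running the wrong way: a bound on $\sum_J q_J^{\,2}\chi_J^2$ for $\mu*\widetilde\mu$ does not by itself bound $\sum_J q_J\chi_J^2$, which is what controls $1-H(\mu*\widetilde\mu;r\mid 2r)$. Without a mechanism to control these weights in both directions, the Fourier route does not close, and the polylogarithmic bookkeeping you outline does not touch this point.
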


A suitable decomposition of $\nu_\lambda$ as a convolution of measures of the form $\nu_\lambda^I$
and then a repeated application of Theorems \ref{th:low-entropy-convolution} and \ref{th:high-entropy-convolution}
leads to the estimate
\[
H(\nu_\lambda;2^{-n}|2^{-n+1})=1+O(n^{-a(\lambda)}),
\]
whenever $\lambda$ is an algebraic number such that $h_\lambda>\log \lambda^{-1}$,
where $a(\lambda)>0$ is a number that depends on $\lambda$.
When $a(\lambda)>1$, this is enough to conclude that $\nu_\lambda$ is absolutely continuous.
The analysis of this inequality leads to the conditions imposed in Theorem \ref{th:ac}.
The details of these calculations are beyond the scope of this note and the interested reader may
consult the original paper \cite{Var}.

\frenchspacing


\begin{thebibliography}{99}

\bibitem{AY}
J. C. Alexander and J. A. Yorke,
Fat baker's transformations.
\textit{Ergodic Theory Dynam. Systems}  \textbf{4}  (1984),  no. 1, 1--23.

\bibitem{AZ}
J. C. Alexander and D.  Zagier,
The entropy of a certain infinitely convolved Bernoulli measure.
\textit{J. London Math. Soc.} (2)  \textbf{44}  (1991),  no. 1, 121--134.

\bibitem{BPS}
B. B\'ar\'any, M.  Pollicott and K.  Simon,
Stationary measures for projective transformations: the Blackwell and
 Furstenberg measures.
\textit{J. Stat. Phys.}  \textbf{148}  (2012),  no. 3, 393--421.


\bibitem{Bou1}
J. Bourgain,
On the Erd\H os-Volkmann and Katz-Tao ring conjectures.
\textit{Geom. Funct. Anal.}  \textbf{13}  (2003),  no. 2, 334--365.

\bibitem{Bou2}
J. Bourgain,
The discretized sum-product and projection theorems.
\textit{J. Anal. Math.}  \textbf{112}  (2010), 193--236.

\bibitem{Bou3}
J. Bourgain,
Finitely supported measures on $\mathrm{SL}_2(\mathbb{R})$ which are absolutely continuous at infinity.
In \textit{Geometric aspects of functional analysis}, 
Lecture Notes in Math., 2050, Springer, Heidelberg,  2012,  133--141.
		
\bibitem{Bou4}
J. Bourgain,
On the Furstenberg measure and density of states for the Anderson-Bernoulli model at small disorder.
\textit{J. Anal. Math.}  \textbf{117}  (2012), 273--295.

\bibitem{Bov}
A. Bovier,
Bernoulli convolutions, dynamical systems and automata.
In \textit{Disordered systems (Temuco, 1991/1992)}, 
Travaux en Cours, 53, Hermann, Paris,  1996,  63--86. 

\bibitem{Bre1}
E. Breuillard,
On uniform exponential growth for solvable groups.
\textit{Pure Appl. Math. Q.}  \textbf{3}  (2007),  no. 4, part 1, 949--967.

\bibitem{Bre2}
E. Breuillard,
A strong Tits alternative.
Preprint. arXiv:0804.1395.

\bibitem{Bre3}
E. Breuillard,
A height gap theorem for finite subsets of $\mathrm{GL}_d(\overline{\mathbb{Q}})$ and nonamenable subgroups.
\textit{Ann. of Math.} (2)  \textbf{174}  (2011),  no. 2, 1057--1110.
		
\bibitem{Bre4}
E. Breuillard,
Heights on $\mathrm{SL}_2$ and free subgroups.
In \textit{Geometry, rigidity, and group actions}, 
Chicago Lectures in Math., Univ. Chicago Press, Chicago, IL,  2011,  455--493. 

\bibitem{Bre5}
E. Breuillard,
Diophantine geometry and uniform growth of finite and infinite groups.
In \textit{Proceedings of the International Congress of Mathematicians, Seoul 2014, Volume 3}
Kyung Moon SA, 2014, 27--50.

\bibitem{BreG}
E. Breuillard and T. Gelander,
Uniform independence in linear groups.
\textit{Invent. math.}  \textbf{173}  (2008),  no. 2, 225--263.
		

\bibitem{BreV1}
E. Breuillard and P. P. Varj\'u,
Entropy of Bernoulli convolutions and uniform exponential growth for linear groups.
Preprint. arXiv:1510.04043.

\bibitem{BreV2}
E. Breuillard and P. P. Varj\'u,
On the dimension of Bernoulli convolutions.
Work in progress.


		
\bibitem{Erd1}
P. Erd\H os,
On a family of symmetric Bernoulli convolutions.
\textit{Amer. J. Math.}  \textbf{61}  (1939), 974--976.



\bibitem{Erd2}
P. Erd\H os,
On the smoothness properties of a family of Bernoulli convolutions.
\textit{Amer. J. Math.}  \textbf{62}  (1940), 180--186.


\bibitem{EMO}
A. Eskin,  S. Mozes and H. Oh,
On uniform exponential growth for linear groups.
\textit{Invent. math.}  \textbf{160}  (2005),  no. 1, 1--30.

\bibitem{Fen}
D.-J. Feng,
Multifractal analysis of Bernoulli convolutions associated with Salem numbers.
\textit{Adv. Math.}  \textbf{229}  (2012),  no. 5, 3052--3077.



\bibitem{FH}
D.-J. Feng and H. Hu,
Dimension theory of iterated function systems.
\textit{Comm. Pure Appl. Math.}  \textbf{62}  (2009),  no. 11, 1435--1500.

\bibitem{FW}
D.-J. Feng and Y. Wang,
Bernoulli convolutions associated with certain non-Pisot numbers.
\textit{Adv. Math.}  \textbf{187}  (2004),  no. 1, 173--194.

MR2889154 (Reviewed)  

\bibitem{Gar1}
A. M. Garsia,
Arithmetic properties of Bernoulli convolutions.
\textit{Trans. Amer. Math. Soc.}  \textbf{102}  (1962), 409--432.


\bibitem{Gar2}
A. M. Garsia,
Entropy and singularity of infinite convolutions.
\textit{Pacific J. Math.}  \textbf{13}  (1963), 1159--1169.



\bibitem{Gri}
R. I. Grigorchuk,
On the Milnor problem of group growth. (Russian)
\textit{Dokl. Akad. Nauk SSSR}  \textbf{271}  (1983),  no. 1, 30--33.


\bibitem{Gro}
M. Gromov,
Groups of polynomial growth and expanding maps.
\textit{Inst. Hautes \'Etudes Sci. Publ. Math.}  \textbf{53}  (1981), 53--73.

\bibitem{HP}
K. G. Hare and M.  Panju,
Some comments on Garsia numbers.
\textit{Math. Comp.}  \textbf{82}  (2013),  no. 282, 1197--1221.


\bibitem{HS}
K. G. Hare and  N. Sidorov,
A lower bound for Garsia's entropy for certain Bernoulli convolutions.
\textit{LMS J. Comput. Math.}  \textbf{13}  (2010), 130--143.



\bibitem{Hoc1}
M. Hochman,
On self-similar sets with overlaps and inverse theorems for entropy.
\textit{Ann. of Math.} (2)  \textbf{180}  (2014),  no. 2, 773--822.

\bibitem{Hoc2}
M. Hochman,
Self similar sets, entropy and additive combinatorics.
In \textit{Geometry and analysis of fractals}, 
Springer Proc. Math. Stat., 88, Springer, Heidelberg,  2014. 225--252
		

\bibitem{Hoc3}
M. Hochman,
On self-similar sets with overlaps and inverse theorems for entropy in $\mathbb{R}^d$.
Preprint. arXiv:1503.09043.

\bibitem{HocS}
M. Hochman, B. Solomyak, Work in progress.

\bibitem{Hu}
T.-Y. Hu,
The local dimensions of the Bernoulli convolution associated with the
 golden number.
 \textit{Trans. Amer. Math. Soc.}  \textbf{349}  (1997),  no. 7, 2917--2940.

\bibitem{JW}
B. Jessen and A. Wintner,
Distribution functions and the Riemann zeta function.
\textit{Trans. Amer. Math. Soc.}  \textbf{38}  (1935),  no. 1, 48--88.

\bibitem{JSS}
T. Jordan, P. Shmerkin, and B.  Solomyak,
Multifractal structure of Bernoulli convolutions.
\textit{Math. Proc. Cambridge Philos. Soc.}  \textbf{151}  (2011),  no. 3, 521--539.


\bibitem{Kah}
 J.-P.  Kahane,
 Sur la distribution de certaines s\'eries al\'eatoires. (French) 
In \textit{Colloque de Théorie des Nombres (Univ. Bordeaux, Bordeaux, 1969)}, 
Bull. Soc. Math. France, M\'em. No. 25, Soc. Math. France Paris,  1971, 119--122.



\bibitem{KP}
T. Kempton and T.  Persson,
Bernoulli convolutions and 1D dynamics.
\textit{Nonlinearity}  \textbf{28}  (2015),  no. 11, 3921--3934.

		
\bibitem{KW}
R. Kershner and A. Wintner,
On Symmetric Bernoulli Convolutions.
\textit{Amer. J. Math.}  \textbf{57}  (1935),  no. 3, 541--548.


\bibitem{Lal}
S. P. Lalley,
Random series in powers of algebraic integers: Hausdorff dimension of
 the limit distribution.
\textit{J. London Math. Soc.} (2)  \textbf{57}  (1998),  no. 3, 629--654.

		
\bibitem{Lau}
K.-S. Lau,
Dimension of a family of singular Bernoulli convolutions.
\textit{J. Funct. Anal.}  \textbf{116}  (1993),  no. 2, 335--358.


\bibitem{LN1}
K.-S. Lau and S.-M. Ngai,
$L^q$-spectrum of the Bernoulli convolution associated with the
 golden ratio.
\textit{Studia Math.}  \textbf{131}  (1998),  no. 3, 225--251.

\bibitem{LN2}
K.-S. Lau and S.-M. Ngai,
$L^q$-spectrum of Bernoulli convolutions associated with P. V.
 numbers.
\textit{Osaka J. Math.}  \textbf{36}  (1999),  no. 4, 993--1010.
		


\bibitem{LP}
F. Ledrappier and A.  Porzio,
A dimension formula for Bernoulli convolutions.
\textit{J. Statist. Phys.}  \textbf{76}  (1994),  no. 5-6, 1307--1327.


\bibitem{LV}
E. Lindenstrauss and P. P. Varj\'u, Work in progress.

\bibitem{Mah}
K. Mahler,
 An inequality for the discriminant of a polynomial.
\textit{Michigan Math. J.}  \textbf{11}  (1964), 257--262.

\bibitem{MS}
R. D. Mauldin and K. Simon,
The equivalence of some Bernoulli convolutions to Lebesgue measure.
\textit{Proc. Amer. Math. Soc.}  \textbf{126}  (1998),  no. 9, 2733--2736.

\bibitem{PerSc}
Y. Peres and  W. Schlag,
Smoothness of projections, Bernoulli convolutions, and the dimension of exceptions.
\textit{Duke Math. J.}  \textbf{102}  (2000),  no. 2, 193--251.

\bibitem{PSS}
Y. Peres, W. Schlag and B.  Solomyak,
Sixty years of Bernoulli convolutions.
In \textit{Fractal geometry and stochastics, II (Greifswald/Koserow, 1998)}, 
Progr. Probab. 46, Birkh\"auser, Basel,  2000,  39--65.



\bibitem{PerSo}
Y. Peres and B. Solomyak,
Absolute continuity of Bernoulli convolutions, a simple proof.
\textit{Math. Res. Lett.}  \textbf{3}  (1996),  no. 2, 231--239.

\bibitem{PolS}
M. Pollicott and K. Simon,
The Hausdorff dimension of $\lambda$-expansions with deleted digits.
\textit{Trans. Amer. Math. Soc.}  \textbf{347}  (1995),  no. 3, 967--983.
		
\bibitem{PU}
F. Przytycki and  M. Urba\'nski,
On the Hausdorff dimension of some fractal sets.
\textit{Studia Math.}  \textbf{93}  (1989),  no. 2, 155--186.

\bibitem{Sal1}
R. Salem, 
Sets of uniqueness and sets of multiplicity.
\textit{Trans. Amer. Math. Soc.}  \textbf{54},  (1943), 218--228.

\bibitem{Sal2}
R. Salem,
A remarkable class of algebraic integers. Proof of a conjecture of Vijayaraghavan.
\textit{Duke Math. J.}  \textbf{11},  (1944), 103--108.

\bibitem{Sar}
P. Sarnak,
Spectra of singular measures as multipliers on $L^{p}$.
\textit{J. Funct. Anal.}  \textbf{37}  (1980),  no. 3, 302--317.


\bibitem{Shm}
P. Shmerkin,
On the exceptional set for absolute continuity of Bernoulli convolutions.
\textit{Geom. Funct. Anal.}  \textbf{24}  (2014),  no. 3, 946--958.

\bibitem{ShmS1}
P. Shmerkin and B. Solomyak,
Absolute continuity of self-similar measures, their projections and convolutions.
\textit{Trans. Amer. Math. Soc.}  \textbf{368}  (2016),  no. 7, 5125--5151.
		
\bibitem{ShmS2}
P. Shmerkin and B. Solomyak,
Absolute continuity of complex Bernoulli convolutions.
\textit{Math. Proc. Cambridge Philos. Soc.} To appear.

\bibitem{SS}
N. Sidorov and B. Solomyak,
Spectra of Bernoulli convolutions as multipliers in $L^p$ on the circle.
\textit{Duke Math. J.}  \textbf{120}  (2003),  no. 2, 353--370.


\bibitem{SV}		
N. Sidorov and A. Vershik,
Ergodic properties of the Erd\H os measure, the entropy of the golden shift, and related problems.
\textit{Monatsh. Math.}  \textbf{126}  (1998),  no. 3, 215--261.


\bibitem{Sol}
B. Solomyak,
On the random series $\sum\pm\lambda^n$ (an Erd\H os problem).
\textit{Ann. of Math.} (2)  \textbf{142}  (1995),  no. 3, 611--625.


\bibitem{Var}
P. P. Varj\'u,
Absolute continuity of Bernoulli convolutions for algebraic parameters.
Preprint.  arXiv:1602.00261.



\bibitem{Wal}
M. Waldschmidt,
Transcendence measures for exponentials and logarithms.
\textit{J. Austral. Math. Soc.} Ser. A  \textbf{25}  (1978),  no. 4, 445--465.

\bibitem{Wan}
Z. Wang,
Quantitative density under higher rank abelian algebraic toral actions.
\textit{Int. Math. Res. Not.} IMRN  2011,  no. 16, 3744--3821.


\bibitem{Win}
A. Wintner,
On Convergent Poisson Convolutions.
\textit{Amer. J. Math.}  \textbf{57}  (1935),  no. 4, 827--838.



\end{thebibliography}
\end{document}